\documentclass{amsart}
\usepackage[T1]{fontenc}
\usepackage{amsmath}
\usepackage{amssymb}
\usepackage{amsfonts}
\usepackage{graphicx}

\usepackage{lmodern}
\usepackage{microtype}
\usepackage{mathtools}
\usepackage[shortlabels]{enumitem}
\usepackage[hidelinks]{hyperref}

\allowdisplaybreaks
\numberwithin{equation}{section}

\newcommand{\M}{\mathcal M}
\newcommand{\one}{\mathbf 1}
\newcommand{\tr}{\tau}

\newcommand{\ind}{\mathbf 1}

\newtheorem{theorem}{Theorem}[section]
\newtheorem{lemma}[theorem]{Lemma}
\newtheorem{proposition}[theorem]{Proposition}
\newtheorem{cor}[theorem]{Corollary}

\theoremstyle{definition}

\newcommand\F{\mathcal{F}}
\newcommand\E{\mathbb{E}}
\newcommand\R{\mathbb{R}}

\numberwithin{equation}{section}

\begin{document}

\title[Burkholder inequalities]{Best constants in noncommutative Burkholder inequalities}

\author{Yong Jiao}
\address{School of Mathematics and Statistics, Central South University, Changsha 410085,
People's Republic of China}
\email{jiaoyong@csu.edu.cn}

\author{Adam Os\k ekowski}
\address{Faculty of Mathematics, Informatics and Mechanics\\
 University of Warsaw\\
Banacha 2, 02-097 Warsaw\\
Poland}
\email{ados@mimuw.edu.pl}

\author{Lian Wu}
\address{School of Mathematics and Statistics, Central South University, Changsha 410085,
People's Republic of China}
\email{wulian@csu.edu.cn}

\author[Y. Zuo]{Yahui Zuo}
\address{School of Mathematics and Computational Science, Xiangtan University, Xiangtan 411105,
People's Republic of China}
\email{yahuizuo@xtu.edu.cn}

\thanks{This work was partially supported by the NSFC (nos.12125109, W2411005, 12361131578, 12401166), the Natural Science Foundation of Hunan Province
(no. 2025ZYJ002), Narodowe Centrum Nauki (Poland), grant 2023/48/Q/ST1/00013 and  the Science and Technology Innovation Program of Hunan Province
(no. 2024RC1016).}

\subjclass[2010]{Primary: 46L53. Secondary: 60G42}
\keywords{Rosenthal inequality, noncommutative, independent, decoupling.}

\begin{abstract}
The paper contains the study of noncommutative Burkholder inequalities. We prove that the constant involved is of order $O(p/\log p)$ when $p\to \infty$, which is known to be optimal even in the commutative setting.
This answers an open problem raised by Randrianantoanina  in 2007, and by Junge and Xu in 2008.
\end{abstract}

\maketitle

\section{Introduction}
The purpose of this paper is to study the sharp version of Burkholder inequality for  noncommutative martingales.
To put our results in an appropriate context, let us present the milestones of the development of the research in the classical case. Inspired by searching for Banach spaces linearly isomorphic to a complemented subspace of an $L^p$-space, Rosenthal \cite{Ro} established the following statement: if $2\leq p<\infty$, then for any sequence of independent mean-zero random variables $d_1$, $d_2$, $\cdots$ belonging to $L^p$ we have the two-sided estimate
\begin{equation}\label{rosenthal_classic}
\begin{split}
&\alpha_p^{-1}\left\{\bigg(\sum_{k=1}^\infty \E d_k^2\bigg)^{1/2}+\left(\E \sum_{k=1}^\infty |d_k|^p\right)^{1/p}\right\}\\
&\leq \left(\E \bigg|\sum_{k=1}^\infty d_k\bigg|^p\right)^{1/p}\leq \beta_p\left\{\bigg(\sum_{k=1}^\infty \E d_k^2\bigg)^{1/2}+\left(\E \sum_{k=1}^\infty |d_k|^p\right)^{1/p}\right\},
\end{split}
\end{equation}
where $\alpha_p$ and $\beta_p$ are constants depending only on $p$. This inequality, named after Rosenthal, nowadays has become a very useful probalistic tool and has found many generalizations and applications (e.g. \cite{AS2005,AS3,CD1,FM,H,HMs,JSXZ,JS,M}).

It is well known that the order of the best constants in probabilistic inequalities usually provide important additional insights. Based on this, the constants involved in \eqref{rosenthal_classic} have  been extensively studied.
The proof of Rosenthal \cite{Ro} gives \eqref{rosenthal_classic} with $\alpha_p=4$ and $\beta_p$ growing exponentially as $p\to \infty$. Johnson, Schechtman and Zinn \cite{JSZ} improved the result and proved that the optimal order of $\beta_p$ as $p\to \infty$ is $O(p/\log p)$. This statement was further generalized by Talagrand \cite{Ta} to the case in which the independent random variables $(d_n)_{n\geq 1}$ take values in a given Banach space. This vector-valued version was re-proved later by Kwapie\'n and Szulga \cite{KS} by means of hypercontractivity-type arguments. Replacing $L^p$-space by a symmetric Banach space $X$ with the Kruglov property, Astashkin and Sukochev \cite{AS-aop} discovered that the sharp constant involved in Rosenthal inequality is equivalent to the norm of the Kruglov operator in $X.$

In 1973, Burkholder proposed an extension of Rosenthal inequality to the martingale setting. It was shown in \cite{Bu0} that if $2\leq p<\infty$ and $d_1$, $d_2$, $\cdots$ is an $L^p$ bounded martingale difference sequence (adapted to some discrete-time filtration $(\F_n)_{n\geq 1}$), then
\begin{equation}\label{burkholder_classic}
\begin{split}
&A_p^{-1}\left\{\left(\E \bigg(\sum_{k=1}^\infty\E_{k-1}d_k^2\bigg)^{p/2}\right)^{1/p}+\left(\E \sum_{k=1}^\infty |d_k|^p\right)^{1/p}\right\}\\
&\leq \left(\E \bigg|\sum_{k=1}^\infty d_k\bigg|^p\right)^{1/p}\!\!\leq B_p\left\{\left(\E \bigg(\sum_{k=1}^\infty\E_{k-1}d_k^2\bigg)^{p/2}\right)^{1/p}\!\!+\left(\E \sum_{k=1}^\infty |d_k|^p\right)^{1/p}\right\}.
\end{split}
\end{equation}
Here $\E_n$ denotes the conditional expectation with respect to the $\sigma$-algebra $\F_n$ and  $A_p$ and $B_p$ are constants depending only on $p$. The above estimate is usually referred to as the Burkholder inequalities. Burkholder's approach, based on  extrapolation technique (good-$\lambda$ inequalities), gave $A_p$ of order $\sqrt{p}$ and $B_p$ of exponential growth as $p\to \infty$. While the order of $A_p$ was the best possible, the information on $B_p$ was far from being optimal. A careful inspection of some results appearing in Garsia's book \cite{Ga} reveals that the right inequality in \eqref{burkholder_classic} holds with $B_p\leq 10p$. This linear growth was finally improved by Hitczenko \cite{Hi} to the optimal $O(p/\log p)$, the same as in the case of independent random variables \cite{JSZ}. See also Os\k ekowski \cite{O} for a different proof of this fact, exploiting the so-called Burkholder's (or Bellman function) method.

In recent years, a trend in the general study of probabilistic inequalities is to find appropriate analogues of classical inequalities in the context of noncommutative $L^p$ spaces. The paper which started the rapid development of this area is the seminal work of Pisier and Xu \cite{PX}, containing the noncommutative counterparts of Burkholder-Gundy inequalities. Since then, many important estimates have been successfully transferred to the noncommutative realm. For example, the noncommutative  Doob's maximal $L^p$ estimate was obtained by Junge \cite{J} and some crucial noncommutative weak-type inequalities were proposed in \cite{R1,R2,R21,R3}. We refer to \cite{BCO,BCPY,CP,HJP1, HJP2,HM,JSZZ,JZWZ,PR,Pe,RWX} and references therein for recent progress on this topic. In particular, noncommutative Burkholder/Rosenthal inequalities, which play a central role in this present paper, were first investigated by Junge and Xu in \cite{JX,JX3}. We refer to \cite{DPS,JiaoSZ,JPX,RW1,RW2,SZ} for the generalizations of noncommutative Burkholder/Rosenthal inequality.

With the development of noncommutative probabilistic inequalities, a natural question is to determine the optimal orders of the constants in these inequalities. Junge and Xu \cite{JX2} discussed the optimal orders of the best constants in the noncommutative Burkholder-Gundy, Doob and Stein inequalities. The only case left unsolved in \cite{JX2} was settled by Randrianantoanina in \cite{R21}. As for the noncommutative Burkholder/Rosenthal inequality, Junge and Xu \cite{JX} established an appropriate analogue of \eqref{burkholder_classic} with both $A_p$, $B_p$ of order $O(p^2)$ as $p\to \infty$.
The above orders of constants were later improved to linear by Randrianantoanina \cite{R3}: while this order is optimal for $A_p$, it is still an open problem whether the same is true for $B_p$. Indeed, this problem is asked by Randrianantoanina  in \cite[Remark 4.6]{R3} (see also Junge and Xu \cite[Remark 2.2]{JX3}).

It is worth mentioning that some positive results in this direction have been obtained a few years ago by Junge and Zeng in \cite{JZ}. Suppose that the operators $d_1$, $d_2$, $\cdots$ are trace-zero and full independent over $\tau$. Then their result can be reformulated as follows:
$$ \left(\tau \bigg|\sum_{k=1}^\infty d_k\bigg|^p\right)^{1/p}\!\!\leq C\left\{\sqrt{p}\bigg(\sum_{k=1}^\infty \tau \left(|d_k|^2\right)\bigg)^{1/2}\!\!+p\left(\tau \left(\sum_{k=1}^\infty |d_k|^p\right)\right)^{1/p}\right\},$$
where $C$ is an absolute constant. The orders $O(\sqrt{p})$ and $O(p)$ are optimal even in the classical case (see \cite{Pi}). The proof of this estimate rested on an appropriate version of the ``$p\to 2p$'' argument, which turn out to work very efficiently for the above estimate (i.e., it returns the constants of optimal order).

The purpose of this paper is to settle the problem of  Randrianantoanina \cite[Remark 4.6]{R3} by establishing the following sharp noncommutative Burkholder inequality. Let $(\mathcal{M},\tau)$ be a von Neumann algebra, paired with a trace satisfying some continuity conditions. Let $(\mathcal{M}_k)_{k\geq 1}$ be an increasing sequence of von Neumann subalgebras of $\mathcal{M}$ such that the union $\bigcup_{k\geq 1} \mathcal{M}_k$ is weak$^{\ast}$ dense in $\mathcal{M}.$ For $k\geq 	1$, denote by $\mathcal{E}_{k}$ the conditional expectation with respect to $\mathcal{M}_k.$

\begin{theorem}\label{main:result}
Let $2\le p<\infty$ and let $d_1$, $d_2$, $\cdots$ be an $L^p$ bounded martingale difference sequence adapted to $(\mathcal M_n)_{n\geq 1}$. Then, with the convention $\mathcal E_0=\mathcal E_1$, we have
\begin{equation*}\label{eq:main}
\begin{split}
\left(\tau \bigg|\sum_{k=1}^\infty d_k\bigg|^p\right)^{1/p} \leq C_{\rm abs}\frac{p}{\log p}&\left\{\left(\tau \bigg(\sum_{k=1}^\infty\mathcal E_{k-1}|d_k|^2\bigg)^{p/2}\right)^{1/p}\right.\\
&\left.+\left(\tau \bigg(\sum_{k=1}^\infty\mathcal E_{k-1}|d_k^*|^2\bigg)^{p/2}\right)^{1/p}+\left(\tau\sum_{k=1}^\infty |d_k|^p\right)^{1/p}\right\}.
\end{split}
\end{equation*}
Moreover, the order
$p/\log p$ is optimal as $p\to \infty.$
\end{theorem}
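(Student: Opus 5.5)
The optimality part is immediate: take $\mathcal M=L^\infty(\Omega)$ with $d_k$ independent. The noncommutative inequality then reduces to the classical Rosenthal/Burkholder inequality, where $p/\log p$ is known to be sharp \cite{JSZ}. So the work is in the upper bound.

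For the upper bound I plan to use a decoupling strategy in the spirit of Hitczenko's proof \cite{Hi}, transferred to the noncommutative setting. The known noncommutative tools with good constants are of three kinds: decoupling/tangent-sequence constructions (amalgamated free or tensor products over $\mathcal M_{k-1}$), Randrianantoanina's linear-order Burkholder--Gundy type estimates \cite{R3}, and the Junge--Zeng estimate \cite{JZ} for independent variables, which carries constant $\sqrt p$ on the square-function term and $p$ on the diagonal term. The difficulty is that a crude reduction loses $p$, not $p/\log p$. The plan therefore has three steps.

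\textbf{Step 1 (decoupling).} Construct a conditionally independent tangent sequence $(\tilde d_k)$. Concretely, embed $\mathcal M$ into a larger algebra $\widetilde{\mathcal M}$ built from tensor products of copies of $\mathcal M_k$ amalgamated over $\mathcal M_{k-1}$, in such a way that $\tilde d_k$ has the same conditional law as $d_k$ given $\mathcal M_{k-1}$ and the $\tilde d_k$ are independent over a common subalgebra $\mathcal N\supset\mathcal M_\infty$. The key decoupling claim is
\[
\|\textstyle\sum d_k\|_p\le C\,\|\textstyle\sum \tilde d_k\|_p+C\,\big(\text{the three Burkholder terms}\big),
\]
with $C$ an absolute constant. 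Classically this is the Kwapie\'n--Woyczy\'nski/Hitczenko inequality for tangent sequences with an absolute constant, valid for conditionally symmetric sequences; a symmetrization handles the general case. In the noncommutative setting I would obtain it through the noncommutative Burkholder--Gundy estimate with constant $O(p)$ applied only to a \emph{difference} martingale. Alternatively, I would run the paper's Bellman-function approach \cite{O} in operator form, since that avoids the loss of $p$.

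\textbf{Step 2 (conditionally independent case).} Prove the sharp Rosenthal inequality for the conditionally independent sequence $\tilde d_k$ over $\mathcal N$, with constant $p/\log p$. Here I would adapt the Johnson--Schechtman--Zinn argument. Truncate at level $\lambda$: the small parts are bounded by the conditional square functions via a hypercontractive-type or Junge--Zeng moment bound $\sqrt p$. For the large parts, a Poissonization/Kruglov operator argument shows their contribution is of order $(p/\log p)\,(\tau\sum|d_k|^p)^{1/p}$. The gain of $\log p$ comes from optimizing the truncation level, using that the number of large jumps behaves like a Poisson variable whose $p$-th moment is $\sim(p/\log p)^p$. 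The noncommutative Poisson estimate would be derived by writing $\sum\tilde d_k\ind_{\text{large}}$ via free/tensor independence and comparing with the $\ell_p$-sum using the column/row Junge--Xu decomposition. This comparison is what produces the two conditional square-function terms.

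\textbf{Step 3 (main obstacle).} I expect the main obstacle to be the decoupling step with a $p$-independent constant. Noncommutative decoupling inequalities are usually only known with constants of order $p$, and Hitczenko's proof uses distributional good-$\lambda$ arguments unavailable for operators. My first attempt would be an extrapolation via $p\to 2p$ iteration in the style of \cite{JZ}: apply the linear-constant inequality of \cite{R3} to $\sum |d_k|^2-\mathcal E_{k-1}|d_k|^2$ at exponent $p/2$, and recursively control the error terms so that the constants telescope to $O(p/\log p)$.
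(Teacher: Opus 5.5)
Your optimality argument is fine; the paper handles it the same way. The upper bound, however, is not proved. Steps 1 and 2 are a programme, and it breaks at the step you yourself flag.

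\textbf{Step 1 (decoupling).} You need $\|\sum_k d_k\|_p\le C\|\sum_k\tilde d_k\|_p+C(\cdots)$ with $C$ independent of $p$. None of your suggested routes supplies this.
\begin{enumerate}[(a)]
\item Tensor-independent copies over a noncommutative $\mathcal M_{k-1}$ do not exist. Two commuting embeddings of $\mathcal M_k$ that agree on $\mathcal M_{k-1}$ force $\mathcal M_{k-1}$ to be commutative.
\item Amalgamated free products do exist, but they cannot work. Take $n$ i.i.d.\ centered Bernoulli$(1/n)$ variables. Their classical sum is essentially a centered Poisson variable, with $L^p$-norm of order $p/\log p$. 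The sum of free copies is essentially a centered free Poisson element, whose operator norm is bounded, and all three Burkholder terms are $O(1)$. So an absolute-constant decoupling into free copies already fails in the commutative case.
\item Routing the error through the $O(p)$ estimates of \cite{R3} puts a factor of order $p$ in front, which already exceeds $p/\log p$.
\item The special function of \cite{O} is a nonlinear function of the martingale and its conditioned square function. Its concavity has no meaning once these are noncommuting operators.
\item A $p\to 2p$ iteration of the type in \cite{JZ} or \cite{R3} is what produces linear growth. Asserting that the constants ``telescope to $O(p/\log p)$'' just restates the theorem.
\end{enumerate}

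\textbf{Step 2 (conditionally independent case).} This has the same status. A $p/\log p$ Rosenthal inequality for conditionally independent operators is itself a special case of the theorem; in the form of the theorem, \cite{JZ} gives only linear growth, even for fully independent variables. The Johnson--Schechtman--Zinn argument uses Hoffmann-J{\o}rgensen's inequality, maximal functions and counting of large jumps, and you transfer none of these.

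\textbf{The missing idea.} No decoupling is needed; the $\log p$ gain can be produced directly at the martingale level by an exponential supermartingale.
\begin{enumerate}[(a)]
\item The triple Golden--Thompson (Lieb) inequality of Lemma \ref{triple-GT} gives $\tau\exp(\theta y_n-\kappa s_n^2(y))\le\tau(\one)$ when the increments are bounded by $M$, with $\kappa=(e^{\theta M}-1-\theta M)/M^2$.
\item Choosing $\theta=\log p/(3a)$ yields the Bennett-type bound $\|y_N\|_p^p\le(32p/\log p)^pa^{p-2}\|y_N\|_2^2$ of Proposition \ref{lem:intrinsic}. It keeps the $L^2$-energy factor.
\item The paper truncates $dx_n$ at dyadic levels $2^j$ and stops with Cuculescu projections of $s_n^2(v^{(j)})$. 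The successive differences $y^{(j)}=w^{(j)}-w^{(j-1)}$ then satisfy the hypotheses with $a=2^j$, and their energies are controlled by spectral tails of $s_N^2(x)$ and dyadic pieces of the $dx_k$.
\item Finally it recombines in $L^{2p}$, using Chebyshev at level $2^js$ with $s\sim p/\log p$. This absorbs the $(p/\log p)^{2p}$ factor and leaves exactly one factor $p/\log p$.
\end{enumerate}
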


The proof combines a noncommutative Bennett-type estimate with a dyadic localization argument. The triple Golden–Thompson inequality yields an exponential trace bound, from which we derive a moment estimate with the optimal $p/\log p$ growth while retaining an $L^2$-energy factor. To apply this estimate, we truncate and center the martingale differences at dyadic levels and use Cuculescu projections associated with their conditioned square functions to construct stopped approximations. Differences between successive approximations have uniformly controlled increments and conditioned square functions, and their $L^2$-energies are estimated by spectral tails of the original conditioned square function and dyadic portions of the martingale differences. These estimates can then be summed over all scales. A key step is to recombine the localized pieces in $L^{2p}$, where a summable convolution kernel prevents any additional loss in the dependence on $p$. Finally, a distribution-function comparison controls the stopping and truncation errors and yields the desired Burkholder inequality for which the optimality follows from the classical commutative case.

The paper is organized as follows. In section 2, we  introduce the notation and recall the necessary facts about noncommutative \(L^p\)-spaces, distribution functions, and martingales, together with the triple Golden–Thompson inequality. In section 3, we establish a Bennett-type  estimate. Section 4 contains the construction of the dyadic stopped approximations and the proof of the localization estimates for their successive differences. Section 5 is devoted to establishing the dyadic summability bounds and the \(L^{2p}\) recombination argument. Finally, the last section compares the original martingale with its stopped approximations and combines the preceding estimates to prove Theorem 1.1.

Throughout, we always use the letter $C_{\rm abs}$ to denote an absolute constant which may vary from line to line.

\section{Preliminaries}
Throughout the paper, we use standard notation from the theory of operator algebras; for the detailed exposition of the subject we refer to the monographs \cite{KR1,KR2,T}. For a given Hilbert space $H$, the symbol $B(H)$ will stand for the algebra of all bounded operators acting on $H$.
Let $\mathcal{M}$ be a von Neumann subalgebra of $B(H)$, equipped with a finite normal faithful trace $\tau$. We will assume that $\tau$ is normalized (i.e., we have $\tau({\bf 1})=1$, where ${\bf 1}$ is the identity operator) and sometimes refer to $(\mathcal{M},\tau)$ as  a noncommutative probability space. The set of all $\tau$-measurable operators will be denoted by $L^0(\mathcal{M})$. For any $\tau$-measurable self-adjoint operator $x$ and any Borel subset $B$ of $\R$, the spectral
projection of $x$ corresponding to the set $B$ will be denoted by $\chi_B(x)$. For a
given operator $x \in \mathcal M$, we denote by $l(x)$ (resp. $r(x)$) the left (resp. right) support
projection of $x$ (see, e.g., \cite{T} page 134, for definitions). If $x=x^*\in \mathcal M$, then $l(x)$ and $r(x)$ coincide, and we denote it by ${\rm supp}(x)$.

The following basic lemma will be used later. The proof is straightforward and we leave details to the reader.

\begin{lemma}\label{basic:proj}
If $p$ is a projection in $\mathcal M$, $q=\one-p$, and $z=z^*\in L^2(\mathcal M)$, then
$$\|z-pzp\|_2^2=2\|pzq\|_2^2+\|qzq\|_2^2\leq 2\tau(qz^2).$$
\end{lemma}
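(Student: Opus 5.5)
The plan is to decompose $z$ with respect to the pair of complementary projections $p$ and $q=\one-p$. Since $p+q=\one$, we have
\[
z=pzp+pzq+qzp+qzq,\qquad\text{so}\qquad z-pzp=pzq+qzp+qzq .
\]
The three operators $pzq$, $qzp$, $qzq$ are mutually orthogonal in the Hilbert space $L^2(\mathcal M)$ with inner product $\langle x,y\rangle=\tau(y^*x)$. For example,
\[
\tau\bigl((qzp)^*pzq\bigr)=\tau(pzq\,pzq)=0
\]
because $qp=0$, and every other cross term contains a product $qp$ or $pq$ after applying the cyclicity of the trace. Hence
\[
\|z-pzp\|_2^2=\|pzq\|_2^2+\|qzp\|_2^2+\|qzq\|_2^2 .
\]
Because $z=z^*$, we have $(pzq)^*=qzp$, and $\|x^*\|_2=\|x\|_2$ for the finite trace. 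This gives $\|qzp\|_2=\|pzq\|_2$ and therefore the stated identity $\|z-pzp\|_2^2=2\|pzq\|_2^2+\|qzq\|_2^2$.

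For the inequality, I compute $\tau(qz^2)=\tau(qzzq)=\|zq\|_2^2$, using $q=q^2$, cyclicity, and $z=z^*$. Splitting $zq=pzq+qzq$ gives two pieces with orthogonal ranges, since $\tau\bigl((qzq)^*pzq\bigr)=\tau(qzq\,pzq)=0$. Therefore
\[
\|zq\|_2^2=\|pzq\|_2^2+\|qzq\|_2^2 ,
\]
and consequently
\[
2\tau(qz^2)=2\|pzq\|_2^2+2\|qzq\|_2^2\ge 2\|pzq\|_2^2+\|qzq\|_2^2 ,
\]
which is the claimed bound.

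The only step needing care is justifying the trace manipulations when $z$ is an unbounded $\tau$-measurable operator in $L^2$. Here $p,q\in\mathcal M$ are bounded, so the products $pzq$ and similar operators lie in $L^2$. Each cross-term trace is of the form $\tau(xy)$ with $x,y\in L^2$, and Hölder's inequality together with the cyclicity $\tau(xy)=\tau(yx)$ for $x,y\in L^2$ makes all the identities above legitimate. If one prefers, one can also first treat $z$ bounded and then pass to the limit using spectral truncations of $z$, which converge to $z$ in $L^2$.
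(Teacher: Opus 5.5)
Your proof is correct. The paper gives no argument for this lemma (it calls it straightforward and leaves it to the reader), and your argument supplies exactly the routine verification: the decomposition $z-pzp=pzq+qzp+qzq$ into mutually orthogonal pieces of $L^2(\mathcal M)$, together with $\tau(qz^2)=\|zq\|_2^2=\|pzq\|_2^2+\|qzq\|_2^2$, with the trace manipulations for unbounded $z\in L^2$ adequately justified by H\"older's inequality and cyclicity of $\tau$.
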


Let $x \in L^0(\mathcal{M})$.  The distribution function of $x$ is defined by
$$\lambda_s(x)=\tau\left(\chi_{(s,\infty)}(|x|)\right), \quad 0\leq s<\infty.$$
For $x, y \in L^0(\mathcal M)$, we have
\begin{equation}\label{distribution-triangle}
\lambda_{s+t}(x+y)\leq \lambda_s(x)+\lambda_t(y),\quad 0\leq s,t<\infty.
\end{equation}
If $x\leq y$ are bounded self-adjoint operators, then
\begin{equation}\label{eq:ordered-tail}
\tau(\chi_{(s,\infty)}(x))
\leq \tau(\chi_{(s,\infty)}(y)),\quad -\infty<s<\infty.
\end{equation}
For $0< p\leq \infty$, we define the associated noncommutative $L^p$-space $L^p(\mathcal M)$ as the set of all measurable operators $x$ such that
$$ \|x\|_p:=(\tau(|x|^p))^{1/p}<\infty,$$
where $|x|=(x^*x)^{1/2}$ is the modulus of $x$.
Recall that for $p=\infty$, the space $L^p(\mathcal{M})$ coincides with $\mathcal{M}$ with its usual operator norm. Given $0< p< \infty$ and $x\in L^p(\mathcal M)$,  the $L^p$-norm of $x$ can be expressed in terms of its distribution function:
$$\|x\|_p^p=p\int_0^\infty s^{p-1}\lambda_s(x)\ ds.$$

\begin{lemma}\label{lem:tails}
For $x\in \mathcal M$, $y\in L^0(\mathcal M)$, we have
\begin{equation}\label{eq:perturb-tail}
\lambda_s(x+y)\leq \tau(r(x))+\lambda_s(y),\quad s>0.
\end{equation}
If in addition $x=x^*$ and $qxq=0$ for some projection $q\in\mathcal M$, then
\begin{equation}\label{eq:rank-bound}
\tau({\rm supp}(x))\leq 2\tau({\bf 1}-q).
\end{equation}
\end{lemma}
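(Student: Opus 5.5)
For \eqref{eq:perturb-tail}, I will use the standard characterization of the distribution function through projections: for $z\in L^0(\mathcal M)$ and $s>0$,
$$\lambda_s(z)=\inf\{\tau(\one-e): e \text{ a projection},\ \|ze\|_\infty\le s\}.$$
This holds because $\|ze\|\le s$ forces $e\wedge\chi_{(s,\infty)}(|z|)=0$, which in turn gives $\tau(e)\le\tau(\chi_{[0,s]}(|z|))$. The infimum is attained at $e=\chi_{[0,s]}(|z|)$.

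Set $e=\chi_{[0,s]}(|y|)\wedge(\one-r(x))$. Since $x(\one-r(x))=0$, we get $(x+y)e=ye=y\chi_{[0,s]}(|y|)e$, so $\|(x+y)e\|\le s$. By the trace subadditivity $\tau(\one-(e_1\wedge e_2))\le\tau(\one-e_1)+\tau(\one-e_2)$, which follows from the parallelogram law $e_1\vee e_2-e_2\sim e_1-e_1\wedge e_2$, we obtain
$$\tau(\one-e)\le\lambda_s(y)+\tau(r(x)).$$
The characterization then gives \eqref{eq:perturb-tail}.

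For \eqref{eq:rank-bound}, write $p=\one-q$. Since $qxq=0$, we have $x=pxp+pxq+qxp$. Then
$$x(q\wedge(\one-r(pxq)))=pxp\cdot 0+pxq\,(\cdots)+qxp\cdot 0 .$$
I will redo this more carefully. It is cleaner to bound the range directly: the range of $x$ is contained in the range of $p$ plus the range of $qxp=q\,x\,p$. Thus
$${\rm supp}(x)=l(x)\le p\vee l(qxp).$$
Since $l(qxp)\sim r(qxp)\le p$, this gives $\tau(l(qxp))\le\tau(p)$. Hence
$$\tau({\rm supp}(x))\le\tau(p)+\tau(l(qxp))\le2\tau(p)=2\tau(\one-q).$$

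The only step that needs care is the range inclusion. For a vector $\xi$, we have $x\xi=p(x\xi)+qxp\xi$, because $qxq=0$. Therefore ${\rm ran}(x)\subseteq{\rm ran}(p)+{\rm ran}(qxp)$, and taking closures yields $l(x)\le p\vee l(qxp)$. The bound $\tau(e\vee f)\le\tau(e)+\tau(f)$ and the relation $l(z)\sim r(z)$ are standard in finite von Neumann algebras, so no real obstacle remains.
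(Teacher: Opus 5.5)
Your proof is correct and essentially the paper's own argument. For \eqref{eq:perturb-tail} you use the same projection $e=\chi_{[0,s]}(|y|)\wedge(\one-r(x))$, applying $\|ze\|_\infty\le s\Rightarrow\lambda_s(z)\le\tau(\one-e)$ directly where the paper routes this step through \eqref{distribution-triangle} and $\lambda_0$. For \eqref{eq:rank-bound} your decomposition $x=(\one-q)x+qx(\one-q)$, together with $l(qx(\one-q))\sim r(qx(\one-q))\le\one-q$, is exactly the paper's; just delete the aborted false start before ``I will redo this more carefully.''
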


\begin{proof}
Given $s>0$, set $e=\chi_{[0,s]}(|y|)\wedge({\bf 1}-r(x))$. Then  $xe=0$ and
$$\|(x+y)e\|_\infty\leq s.$$
Using \eqref{distribution-triangle} and the above inequality, we get
\begin{align*}
\lambda_{s}(x+y)&\leq \lambda_s((x+y)e)+\lambda_0((x+y)(1-e))\\
&=\lambda_0((x+y)({\bf 1}-e))\\
&\leq \tau({\bf 1}-e) ) \leq \lambda_s(y)+\tau(r(x)),
\end{align*}
This verifies the estimate \eqref{eq:perturb-tail}.

We now show \eqref{eq:rank-bound}. Put $f={\bf 1}-q$. Since $qxq=0$, we may write
\[
x=fx+qxf.
\]
Obviously, $l(fx)\leq f$  and   $r(qxf)\leq f$. Therefore,
$$\tau(l(x))\leq \tau(l(fx))+\tau(l(qxf))=\tau(l(fx))+\tau(r(qxf))\leq  2\tau(f).$$
For self-adjoint $x$,
$l(x)=r(x)={\rm supp}(x)$, proving \eqref{eq:rank-bound}.
\end{proof}

We record below the triple Golden-Thompson inequality; see \cite[Theorem~A]{Rand24}  for the proof. Indeed, in \cite{Rand24}, this theorem is stated for a tracial state and for wider
measurable operators.

\begin{lemma}\label{triple-GT}
For self-adjoint operators $a,b,c\in\mathcal M$,
\begin{equation}\label{eq:triple-GT}
\tau(e^{a+b+c})\leq
\int_0^\infty\tau\left(
 e^{c/2}(e^{-a}+t{\bf 1})^{-1}e^b
 (e^{-a}+t{\bf 1})^{-1}e^{c/2}\right)\,dt.
\end{equation}
\end{lemma}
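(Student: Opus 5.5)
The plan is to derive \eqref{eq:triple-GT} from Lieb's concavity theorem by a first-order (tangent-line) argument, in the style of Tropp's proof of the matrix Lieb triple inequality. Throughout, $\log$ of a positive invertible element is taken by functional calculus.

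\emph{Reformulation.} Put $Y=e^{-a}$ and $Z=e^{b}$; both are positive and invertible in $\mathcal M$. Then
\[
a+b+c=(c-\log Y)+\log Z .
\]
Define $H=c-\log Y=c+a$, which is a bounded self-adjoint operator, and set
\[
F(A)=\tau\big(\exp(H+\log A)\big)
\]
for positive invertible $A\in\mathcal M$. With this notation the left side of \eqref{eq:triple-GT} equals $F(Z)$.

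\emph{Step 1: concavity.} The key input is Lieb's theorem: $F$ is concave on the cone of positive invertible elements of $\mathcal M$.
\begin{itemize}
\item For matrix algebras with the normalized trace this is Lieb's 1973 theorem. One can take it from Lieb's original paper, or from the Epstein/Ando route (joint concavity of $(A,B)\mapsto\tau(K^*A^{s}KB^{1-s})$ plus a variational formula for $\tau\exp$).
\item To pass to a general finite von Neumann algebra, I would avoid finite-dimensional approximation, which is unavailable in general. Instead I would use the variational characterization
\[
\tau(e^{H+\log A})=\sup_{X>0}\big\{\tau(X H)-\tau(X\log X-X\log A)+\tau(X)\big\}.
\]
This follows from Gibbs' variational principle, i.e.\ from nonnegativity of Umegaki relative entropy in the tracial setting. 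By this formula, concavity of $F$ reduces to joint convexity of relative entropy $\tau(X\log X-X\log A)$.
\item Joint convexity of relative entropy holds in any semifinite von Neumann algebra (Lindblad/Araki, via operator convexity of $t\log t$ and Kubo–Ando perspectives).
\end{itemize}
I expect this to be the main obstacle: every ingredient must be checked to hold for bounded operators in a finite von Neumann algebra with a normalized trace, not just for matrices.

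\emph{Step 2: tangent inequality.} Since $F$ is concave, for positive invertible $Z$ and $Y$,
\[
F(Z)\le F(Y)+DF(Y)[Z-Y].
\]
Here $DF(Y)$ is the Fréchet derivative, which exists because all operators involved are bounded and the spectra of $Y$ and $Z$ are bounded away from $0$.
\begin{itemize}
\item First, $F(Y)=\tau(e^{c})$, since $H+\log Y=c$.
\item To compute the derivative I use two standard facts. The derivative of $\log$ at $Y$ is
\[
T_Y(W)=\int_0^\infty (Y+t\one)^{-1}W(Y+t\one)^{-1}\,dt,
\]
obtained by differentiating $\log Y=\int_0^\infty\big((1+t)^{-1}\one-(Y+t\one)^{-1}\big)dt$ under the integral. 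Also, $D\,\tau(e^{K})[V]=\tau(e^{K}V)$, obtained from the Duhamel formula and cyclicity of $\tau$.
\item Combining these,
\[
DF(Y)[W]=\tau\big(e^{c}\,T_Y(W)\big).
\]
\item Moreover $T_Y(Y)=\int_0^\infty Y(Y+t\one)^{-2}dt=\one$.
\item Hence
\[
DF(Y)[Z-Y]=\tau\big(e^{c}T_Y(Z)\big)-\tau(e^{c}).
\]
\end{itemize}

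\emph{Step 3: conclusion.} Substituting Step 2 into the tangent inequality, the $\tau(e^c)$ terms cancel and we get
\[
\tau(e^{a+b+c})\le \tau\big(e^{c}T_Y(e^{b})\big).
\]
Finally, use cyclicity of $\tau$ to write $\tau(e^{c}X)=\tau(e^{c/2}Xe^{c/2})$, and use normality of $\tau$ (monotone convergence) to interchange $\tau$ with the Bochner integral in $t$. The integrand is positive and of order $t^{-2}$ in norm as $t\to\infty$, so the interchange is justified. This gives exactly \eqref{eq:triple-GT} with $(e^{-a}+t\one)^{-1}$ in place of $(Y+t\one)^{-1}$.
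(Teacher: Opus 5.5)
Your argument is correct. It differs from the paper simply because the paper does not prove Lemma~\ref{triple-GT}: it imports it from \cite[Theorem~A]{Rand24}, where it is stated for a tracial state and for a wider class of $\tau$-measurable, possibly unbounded, operators.

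You instead give a self-contained proof along Tropp's lines. With $Y=e^{-a}$ and $H=a+c$, you combine Lieb's concavity of $A\mapsto\tau(\exp(H+\log A))$ with the tangent-line inequality at $Y$ in the direction $e^{b}-Y$. The identity $\int_0^\infty Y(Y+t\one)^{-2}\,dt=\one$ then cancels the zeroth-order term $\tau(e^{c})$; it is the same identity the paper later uses in Lemma~\ref{lem:exp} to collapse the right-hand side back to $\tau(e^{H})$. Your derivative computations are right.

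The final interchange needs even less than normality. The integrand has norm at most $\|e^{b}\|\,\|e^{c}\|\,(\lambda+t)^{-2}$ with $\lambda=\min\sigma(e^{-a})>0$, so the integral converges in operator norm and norm-continuity of $\tau$ suffices.

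The one substantive input is Lieb's concavity in a finite von Neumann algebra. You correctly reduce it to two facts:
\begin{itemize}
\item Klein's inequality, which gives the variational formula with the supremum attained at $X=e^{H+\log A}$;
\item joint convexity of $(X,A)\mapsto\tau(X\log X-X\log A)$.
\end{itemize}
For the second, the perspective argument you allude to works verbatim in $\M$. Apply it to the commuting left and right multiplication operators $L_X$, $R_A$ on $L^2(\M,\tau)$, using $\tau(X\log X-X\log A)=\langle L_X(\log L_X-\log R_A)\one,\one\rangle$. Araki's relative entropy would also do. Lindblad's original argument is finite-dimensional and goes through Lieb's matrix concavity theorem, so it is not the reference to lean on here.

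As for what each route buys: the citation covers unbounded operators at no cost. Your proof is self-contained for bounded $a,b,c\in\M$, which is exactly the case used in Lemma~\ref{lem:exp}. There $a=-\kappa\,\mathcal E_{k-1}(dy_k^2)$, $b=\theta\,dy_k$ and $c=\theta y_{k-1}-\kappa s_{k-1}^2(y)$ are all bounded.
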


We now turn our attention to the general setup of noncommutative martingales. Suppose that $(\mathcal{M}_n)_{n\geq 1}$ is a filtration, i.e., a nondecreasing sequence of von Neumann subalgebras of $\mathcal{M}$ whose union is weak$^*$-dense in $\mathcal{M}$. Then for any $n\geq 1$ there is a normal conditional expectation $\mathcal{E}_n$ from $\mathcal{M}$ onto $\mathcal{M}_n$, satisfying
\begin{enumerate}[{\rm (i)}]
\item $\mathcal{E}_n(axb)=a\mathcal{E}_n(x)b$ for all $a,\,b\in\mathcal{M}_n$ and $x\in \mathcal{M}$;
\item $\tau\circ \mathcal{E}_n=\tau$.
\end{enumerate}
It is straightforward to check that the conditional expectations satisfy the tower property $\mathcal{E}_m\mathcal{E}_n=\mathcal{E}_n\mathcal{E}_m=\mathcal{E}_{\min(m,n)}$ for all  $m$ and $n$. Furthermore, since $\mathcal{E}_n$ is trace preserving, it can be extended to a contractive projection from $L^p(\mathcal{M},\tau)$ onto $L^p(\mathcal{M}_n,\tau_n)$ for all $1\leq p\leq \infty$, where $\tau_n$ is the restriction of $\tau$ to $\mathcal{M}_n$.

A sequence $(x_n)_{n\geq 1}$ is called a noncommutative  martingale with respect to $(\mathcal M_n)_{\geq 1}$ if $x_n\in L^1(\mathcal{M}_n)$ for all $n\geq 1$ and
$$\mathcal{E}_{n-1}(x_n)=x_{n-1},\quad n\geq 1.$$
Let $1\leq p\leq \infty$ and let $(x_n)_{n\geq 1}\subset L_p(\mathcal M)$ be a noncommutative martingale. We say that $(x_n)_{n\geq 1}$ is  $L_p$-bounded if
$$\sup_{n\geq 1}\|x_n\|_p<\infty.$$
The difference sequence $dx=(dx_n)_{n\geq 1}$ of a martingale $x=(x_n)_{n\geq 1}$ is defined by
$$dx_1=x_1,\quad dx_n=x_{n}-x_{n-1}.$$
Given a martingale $x = (x_n)_{n \geq 1}$ in $L^2(\mathcal M)$, we set
 \[s_{c,n} (x) = \Big ( \sum_{k=1}^n \mathcal E_{k-1}|dx_k |^2 \Big )^{1/2},\quad s_c (x) = \Big ( \sum_{k \geq 1} \mathcal E_{k-1}|dx_k |^2 \Big )^{1/2}\]
and
 \[s_{r,n} (x) = \Big ( \sum_{k=1}^n \mathcal E_{k-1}| dx_k^* |^2 \Big)^{1/2},\quad s_r (x) = \Big ( \sum_{k \geq1} \mathcal E_{k-1}| dx_k^* |^2 \Big)^{1/2},
 \]
with the convention that $\mathcal E_0=\mathcal E_1$. These are called the column and row conditioned square functions, respectively.  For self-adjoint martingales $x=(x_n)_{n\geq 1}$, we  simplicity write $s_n(x)$ and $s(x)$, instead.

\section{A Bennett type estimate}
The purpose of this section is to obtain a moment estimate for self-adjoint martingales whose differences and conditioned square functions are bounded in operator norm. Using the triple Golden–Thompson inequality, we first establish an exponential trace bound with a compensating conditional variance term. We then derive a Bennett-type tail estimate that retains the \(L^2\)-energy of the terminal value. Integration yields a moment bound with \(p/\log p\) growth, whose \(L^2\)-energy factor will be essential when summing the localized estimates over dyadic scales.

\begin{lemma}\label{lem:exp}
Let  $M>0$, $\theta>0$ and $\kappa= \frac{e^{\theta M}-1-\theta M}{M^{2}}$. Suppose that $y=(y_k)_{1\leq k\leq N}$ is a finite self-adjoint martingale with $y_1=0$ and
$\sup_{1\leq k\leq N}\|dy_k\|_\infty \leq M$.
Then, for every $1 \leq n\leq N$,
\begin{equation}\label{eq:comp-exp}
\tau\big(\exp\left(\theta y_n -\kappa s_n^2(y)\right)\big)\leq\tau({\bf 1}).
\end{equation}
The same assertion holds with $y$ replaced by $-y$.
\end{lemma}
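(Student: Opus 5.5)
We argue by induction on $n$, showing that $n\mapsto \tau\big(\exp(\theta y_n-\kappa s_n^2(y))\big)$ is nonincreasing. For $n=1$ we have $y_1=0$ and $s_1^2(y)=\mathcal E_0|dy_1|^2=0$, so the left-hand side equals $\tau(\one)$. The inductive step rests on the triple Golden--Thompson inequality (Lemma \ref{triple-GT}). Write
$$\theta y_n-\kappa s_n^2(y)=\big(\theta y_{n-1}-\kappa s_{n-1}^2(y)\big)+\theta\, dy_n-\kappa\,\mathcal E_{n-1}(dy_n^2),$$
and apply \eqref{eq:triple-GT} with $b=\theta\,dy_n$ and $c=-\kappa\,\mathcal E_{n-1}(dy_n^2)$. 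For $a$ we take either $\theta y_{n-1}-\kappa s_{n-1}^2(y)$ alone, or that operator together with $c$; the latter is also $\mathcal M_{n-1}$-measurable, which is all we need. All these operators are bounded, since the martingale is finite and its increments are bounded.

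The key step is to take the conditional expectation $\mathcal E_{n-1}$ inside the trace. We choose $a=\theta y_{n-1}-\kappa s_{n-1}^2(y)\in\mathcal M_{n-1}$ and $c\in\mathcal M_{n-1}$. Then the operators $(e^{-a}+t)^{-1}$ and $e^{c/2}$ belong to $\mathcal M_{n-1}$. Using $\tau=\tau\circ\mathcal E_{n-1}$ and the bimodule property, the integrand becomes
$$\tau\big(e^{c/2}(e^{-a}+t)^{-1}\,\mathcal E_{n-1}(e^{\theta dy_n})\,(e^{-a}+t)^{-1}e^{c/2}\big).$$
Next we bound $\mathcal E_{n-1}(e^{\theta dy_n})$ by the scalar inequality $e^x\le 1+x+\frac{e^{\theta M}-1-\theta M}{(\theta M)^2}x^2$, valid for $x\le\theta M$ because $(e^x-1-x)/x^2$ is increasing. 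By functional calculus applied to the self-adjoint $dy_n$ with $\|dy_n\|_\infty\le M$, this gives $e^{\theta dy_n}\le \one+\theta dy_n+\kappa\, dy_n^2$. Since $\mathcal E_{n-1}$ is positive and $\mathcal E_{n-1}(dy_n)=0$, we obtain
$$\mathcal E_{n-1}(e^{\theta dy_n})\le \one+\kappa\,\mathcal E_{n-1}(dy_n^2)\le \exp\big(\kappa\,\mathcal E_{n-1}(dy_n^2)\big)=e^{-c}.$$
Conjugating preserves order, so the integrand is at most $\tau\big(e^{c/2}(e^{-a}+t)^{-1}e^{-c}(e^{-a}+t)^{-1}e^{c/2}\big)$.

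The main obstacle is the last step: we must conclude that $\int_0^\infty \tau\big(e^{c/2}R_te^{-c}R_te^{c/2}\big)\,dt\le\tau(e^a)$, where $R_t=(e^{-a}+t)^{-1}$. The difficulty is that $c$ and $a$ need not commute, so $e^{c/2}$ and $e^{-c}$ do not simply cancel. Cyclicity of the trace lets us rewrite the integrand as $\tau(R_te^{-c}R_te^{c})$, but this still does not reduce directly. A cleaner route is to absorb the compensator into $a$ instead, taking
$$a=\theta y_{n-1}-\kappa s_n^2(y)\in\mathcal M_{n-1},\qquad b=\theta\,dy_n,\qquad c=0.$$
This is legitimate because $s_n^2(y)$ is itself $\mathcal M_{n-1}$-measurable. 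However, the bound $\mathcal E_{n-1}(e^{b})\le e^{\kappa\mathcal E_{n-1}(dy_n^2)}$ then does not recombine with $e^{a}$ either, again because of noncommutativity. So we plan instead to use the first choice of $a$, $b$, $c$ and, in the final step, the operator inequality $e^{c/2}R_te^{-c}R_te^{c/2}$ together with the identity $\int_0^\infty R_t^2\,dt=e^{a}$. The identity follows from $\int_0^\infty (\lambda+t)^{-2}\,dt=\lambda^{-1}$ applied with $\lambda=e^{-a}$.

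Concretely, we will try to show that $\int_0^\infty\tau(R_t e^{-c} R_t e^{c})\,dt\le\tau(e^{a})$. The plan is to pass to the spectral decomposition of $a$ and use the convexity and Lieb-type concavity inherent in the map $(X,Y)\mapsto\int\tau(R_tXR_tY)\,dt$, which is the derivative representation of $\log$ appearing in Lieb's theorem. If this fails, we will use the first-order bound $e^{-c}\le \one-c+\tfrac{c^2}{2}$ as a fallback. Once the monotone step is established, induction gives the claim for every $n$. Finally, the statement for $-y$ is immediate: $-y$ is a self-adjoint martingale satisfying the same hypotheses, and $s_n(-y)=s_n(y)$.
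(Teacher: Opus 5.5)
Your setup is correct up to the last step. The pieces that work are the triple Golden--Thompson inequality, moving $\mathcal E_{n-1}$ inside the trace (the resolvent and outer factors lie in $\mathcal M_{n-1}$), and the scalar bound $\mathcal E_{n-1}(e^{\theta dy_n})\le \exp(\kappa\,\mathcal E_{n-1}(dy_n^2))$, which is equivalent to the paper's Taylor-formula step. The gap is the recombination, which you leave open, and the route you commit to cannot close it.

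With $a=H:=\theta y_{n-1}-\kappa s_{n-1}^2(y)$ in the resolvent $R_t=(e^{-a}+t\mathbf 1)^{-1}$ and $c=-\kappa\,\mathcal E_{n-1}(dy_n^2)$ outside, your target $\int_0^\infty\tau(R_te^{-c}R_te^{c})\,dt\le\tau(e^{a})$ is false in general. Work in $M_2(\mathbb C)$ with the unnormalized trace $\mathrm{Tr}$. Diagonalize $a$, and let $\lambda_1,\lambda_2$ be the eigenvalues of $e^{-a}$. Since $\int_0^\infty(\lambda+t)^{-1}(\mu+t)^{-1}\,dt=1/L(\lambda,\mu)$, with $L$ the logarithmic mean, the integral equals $\sum_{i,j}P_{ij}/L(\lambda_i,\lambda_j)$, where $P_{ij}=(e^{-c})_{ij}(e^{c})_{ji}$. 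Each row of $(P_{ij})$ sums to $(e^{-c}e^{c})_{ii}=1$. For real symmetric $c$ one has $P_{12}=P_{21}=-\delta\le 0$, with $\delta>0$ as soon as $c$ is not diagonal in this basis. Hence
\[
\int_0^\infty\mathrm{Tr}(R_te^{-c}R_te^{c})\,dt=(1+\delta)\big(\lambda_1^{-1}+\lambda_2^{-1}\big)-\frac{2\delta}{L(\lambda_1,\lambda_2)}>\lambda_1^{-1}+\lambda_2^{-1}=\mathrm{Tr}(e^{a})
\]
whenever $\delta>0$ and $\lambda_1\neq\lambda_2$, because the logarithmic mean exceeds the harmonic mean. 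Such noncommuting pairs do occur in the martingale setting. For example, in $M_2(\mathbb C)\otimes\ell^\infty(\{\pm1\})$ with $\mathcal M_1=\mathbb C\mathbf 1$, take $dy_2$ a traceless matrix and $dy_3=K\otimes\varepsilon$, where $\varepsilon$ is a Rademacher variable and $K^2$ does not commute with $dy_2$. So once you replace $\mathcal E_{n-1}(e^{b})$ by $e^{-c}$, you have already lost.

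The fallback $e^{-c}\le\mathbf 1-c+c^2/2$ is also false, since $-c\ge 0$ and $e^x\ge 1+x+x^2/2$ for $x\ge 0$. As you observed yourself, absorbing the compensator into $a$ with $c=0$ does not recombine either.

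The missing idea is to swap the roles of the two $\mathcal M_{n-1}$-measurable pieces. Put the compensator alone in the resolvent slot and the previous exponent in the outer slot: $a=-\kappa\,\mathcal E_{n-1}(dy_n^2)$, $b=\theta\,dy_n$, $c=H$. Then $R_t=(\exp\{\kappa\,\mathcal E_{n-1}(dy_n^2)\}+t\mathbf 1)^{-1}$ is a function of $e^{-a}$. After conditioning and the order bound (conjugation by $R_te^{H/2}$), the integrand is at most $\tau(e^{H/2}R_te^{-a}R_te^{H/2})$. Because $R_t$ commutes with $e^{-a}$,
\[
\int_0^\infty R_te^{-a}R_t\,dt=\int_0^\infty e^{-a}(e^{-a}+t\mathbf 1)^{-2}\,dt=\mathbf 1 ,
\]
so the bound collapses to $\tau(e^{H})$, which is exactly the monotonicity you need.

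The operator that dominates $\mathcal E_{n-1}(e^{b})$ cancels for free only when it sits inside the resolvent, where everything commutes with it. The noncommuting $H$ must stay in the outer slot, where it is carried through untouched. This is precisely the paper's choice.
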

\begin{proof}
For any $s\in\mathbb R$ with $|s|\leq M$, the integral Taylor formula gives
\begin{align*}
e^{\theta s}-1-\theta s
&=\theta^2s^2\int_0^1(1-u)e^{u\theta s}\,du\\
&\leq\theta^2s^2\int_0^1(1-u)e^{u\theta M}\,du
=\kappa s^2.
\end{align*}
By functional calculus, for  every $2\leq k\leq N$,
\begin{equation}\label{eq:one-step}
\mathcal E_{k-1}(e^{\theta dy_k})\leq{\bf 1}+\kappa\cdot \mathcal E_{k-1}(dy_k^2)\leq \exp\{\kappa\cdot \mathcal E_{k-1}(dy_k^2)\}.
\end{equation}
For $t\in (0,\infty)$, set $R_t:=(\exp\{\kappa\mathcal E_{k-1}(dy_k^2)\}+t{\bf 1})^{-1}$. Applying Lemma \ref{triple-GT} with
$$a=-\kappa\cdot \mathcal E_{k-1}(dy_k^2),\quad b=\theta dy_k,\quad c=H:=\theta y_{k-1}-\kappa  s_{k-1}^2(y),$$
and using \eqref{eq:one-step}, we obtain
\begin{align*}
\tau\big(\exp\left(\theta y_k -\kappa s_k^2(y)\right)\big)
&\leq \int_0^\infty
 \tau\left(e^{H/2} R_t e^{\theta dy_k}R_t e^{H/2}\right)\,dt\\
&=\int_0^\infty
 \tau\left(e^{H/2}R_t\mathcal E_{k-1}(e^{\theta dy_k})R_t e^{H/2}\right)\,dt\\
 &\leq\int_0^\infty
 \tau\left(e^{H/2}R_t \exp\{\kappa\cdot \mathcal E_{k-1}(dy_k^2)\} R_t e^{H/2}\right)\,dt.
\end{align*}
Since $\int_0^\infty R_t \exp\{\kappa\cdot \mathcal E_{k-1}(dy_k^2)\} R_t\,dt={\bf 1}$ (in the norm convergence), the right side equals $\tau(e^H)$. Namely, we have
\begin{align*}
\tau\big(\exp\left(\theta y_k-\kappa s_k^2(y)\right)\big)
&\leq \tau\big(\exp\left(\theta y_{k-1}-\kappa s_{k-1}^2(y)\right)\big).
\end{align*}
Note that $y_1=s_1(y)=0$. By iteration, we conclude the desired result \eqref{eq:comp-exp}. Applying the
same argument to $-y$ proves the last assertion.
\end{proof}

Using Lemma \ref{lem:exp}, we may establish the following Bennett type estimate.

\begin{proposition}\label{lem:intrinsic}
Let $a>0$ and let $y=(y_n)_{1\leq n\leq N}$ be a self-adjoint martingale
with $y_1=0$ such that
\begin{equation}\label{eq:intrinsic-assumptions}
 \sup_{1\leq n\leq N}\|dy_n\|_\infty\leq3a,
 \qquad
 s_N^2(y)\leq5a^2{\bf 1}.
\end{equation}
Then for any $4\leq p<\infty$,
\begin{equation}\label{eq:intrinsic-moment}
\|y_N\|_p^p\leq \left(\frac{32p}{\log p}\right)^p a^{p-2}\|y_N\|_2^2.
\end{equation}
\end{proposition}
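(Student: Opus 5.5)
We plan to convert the exponential bound of Lemma~\ref{lem:exp} into a tail estimate and then integrate it against $ps^{p-1}$. The factor $\|y_N\|_2^2$ will come from using the trivial Chebyshev bound $\lambda_s(y_N)\le s^{-2}\|y_N\|_2^2$ for small $s$, and the exponential tail for large $s$.

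\emph{Step 1 (tail bound).} Apply Lemma~\ref{lem:exp} with $M=3a$ and $n=N$. Since $s_N^2(y)\le 5a^2\mathbf 1$, we have $\theta y_N-5\kappa a^2\mathbf 1\le \theta y_N-\kappa s_N^2(y)$. Monotonicity of $x\mapsto\tau(e^x)$ then gives $\tau(e^{\theta y_N})\le e^{5\kappa a^2}\tau(\mathbf 1)$, and the same holds for $-y_N$. Spectral Chebyshev then yields
\[
\tau(\chi_{(s,\infty)}(\pm y_N))\le \exp\bigl(-\theta s+5a^2\kappa\bigr),\qquad \kappa=\frac{e^{3\theta a}-1-3\theta a}{9a^2}.
\]
Choosing $\theta=\frac1{3a}\log(1+\frac{3s}{5a})$ gives the Bennett bound $\lambda_s(y_N)\le 2\exp\bigl(-\frac{s}{3a}\log(1+\frac{3s}{5a})+\ldots\bigr)$. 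More crudely, for $s\ge ca$ we get $\lambda_s(y_N)\le 2\exp(-\frac{s}{c'a}\log\frac{s}{a})$.

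\emph{Step 2 (retaining $\|y_N\|_2^2$).} A pure exponential tail with $\tau(\mathbf 1)=1$ would lose the $L^2$ factor, so we interpolate the two tail bounds. For $s>0$,
\[
\lambda_s(y_N)\le \min\Bigl\{\frac{\|y_N\|_2^2}{s^2},\,2e^{-\phi(s)}\Bigr\}.
\]
A better route is to note that the argument of Step 1 actually controls $\tau(e^{\theta y_N}-\mathbf 1-\theta y_N)$. Indeed, $\tau(y_N)=0$, and $\tau(e^{\theta y_N})-1\le e^{5\kappa a^2}-1$ is itself of size $\kappa\,\tau(s_N^2)=\kappa\|y_N\|_2^2$. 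To make this rigorous, I would rerun the iteration in the proof of Lemma~\ref{lem:exp} without the compensator and track $\tau(e^{\theta y_k})-1$ inductively. Alternatively, I would use $e^{x}\le 1+x+x^2e^{|x|}$ together with the $L^2$ identity $\|y_N\|_2^2=\tau(s_N^2(y))$. The simplest option is to estimate
\[
\|y_N\|_p^p=\tau(|y_N|^2|y_N|^{p-2}),
\]
splitting $|y_N|$ at level $\lambda=Ka$ with $K\approx p/\log p$. On $\{|y_N|\le\lambda\}$ the contribution is $\le\lambda^{p-2}\|y_N\|_2^2$. On the complement we use Hölder between $|y_N|^2$ and the exponential tail; this requires showing that the large-$s$ contribution is also $\lesssim\|y_N\|_2^2$ times $(Cpa/\log p)^{p-2}$.

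\emph{Main obstacle.} The main obstacle is making the large-$s$ part proportional to $\|y_N\|_2^2$ rather than to $\tau(\mathbf 1)$. My plan is to prove the refined exponential estimate $\tau(\cosh(\theta y_N))-1\le(e^{5\kappa a^2}-1)\cdot\|y_N\|_2^2/(5a^2)$. The convexity of $u\mapsto e^{\kappa u}$ on $[0,5a^2]$ would give $\tau(e^{\kappa s_N^2})-1\le\frac{e^{5\kappa a^2}-1}{5a^2}\tau(s_N^2)$, provided the compensated bound can be modified to $\tau(e^{\theta y_N})\le\tau(e^{\kappa s_N^2})$. This is what the triple Golden--Thompson iteration should deliver: put the $a$-slot as $-\kappa$ times the increment while keeping $e^{\kappa s^2_{N}}$ accumulated. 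Then $\lambda_s(y_N)\le e^{-\theta s}\cdot C\kappa\|y_N\|_2^2 e^{5\kappa a^2}$. Integrating $ps^{p-1}$ against this with $\theta\approx\frac{1}{3a}\log p$ gives
\[
(Cp/\theta)^{p}\,\kappa e^{5\kappa a^2}\|y_N\|_2^2\approx (Cap/\log p)^p\,\frac{p}{a^2\log^2 p}\|y_N\|_2^2 .
\]
For $s\le \lambda$ we use Chebyshev in $L^2$. Balancing the two pieces should produce $(32p/\log p)^pa^{p-2}\|y_N\|_2^2$ for $p\ge4$, after routine numerics.
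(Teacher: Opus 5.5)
\paragraph{Where the proposal breaks.} Your final integration scheme is the right one, and so is the tail bound you aim for, $\lambda_s(y_N)\lesssim\kappa e^{5\kappa a^2-\theta s}\|y_N\|_2^2$. The gap is the mechanism you propose for producing the factor $\|y_N\|_2^2$. The inequality $\tau(e^{\theta y_N})\le\tau(e^{\kappa s_N^2(y)})$, which you hope a rearranged triple Golden--Thompson iteration will deliver, is false already for classical martingales satisfying the hypotheses of the proposition.

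Take $p=4$ and $a=1/3$, so that $M=1$, $\theta=\log 4$ and $\kappa=3-\log 4$. Let $\mathcal F_1$ be trivial, let $d_2=1$ with probability $1/100$ and $d_2=-1/99$ otherwise, and let $d_3=\frac12\varepsilon\ind_{\{d_2=1\}}$ with $\varepsilon$ an independent sign. Then $|d_k|\le 3a$ and $s_3^2=\frac1{99}+\frac14\ind_{\{d_2=1\}}\le 5a^2$, but
\[
\E e^{\theta y_3}=\tfrac{1}{20}+\tfrac{99}{100}\,4^{-1/99}\approx 1.026>1.021\approx e^{\kappa/99}\bigl(\tfrac{1}{100}e^{\kappa/4}+\tfrac{99}{100}\bigr)=\E e^{\kappa s_3^2}.
\]
The reason is that a past jump enters $y$ in full but enters $s^2$ only through its small conditional variance. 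Later conditional variance can therefore sit exactly where $y$ is already large. This is why Lemma~\ref{lem:exp} has to keep $-\kappa s_n^2$ in the same exponent as $\theta y_n$, and no reshuffling of the slots separates them.

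Your other suggestions do not close the gap either. Combining $\lambda_s\le s^{-2}\|y_N\|_2^2$ with a tail bound that does not involve $\|y_N\|_2^2$, or splitting at $Ka$ and applying H\"older, gives an extra factor growing like a power of $\log(a^2/\|y_N\|_2^2)$, so the bound is not linear in $\|y_N\|_2^2$. Also, $e^{5\kappa a^2}-1$ is not of size $\kappa\|y_N\|_2^2$; it does not involve $\|y_N\|_2$ at all.

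\paragraph{The missing idea.} No new iteration is needed. Apply Lemma~\ref{lem:exp} as it stands to $H_\pm=\pm\theta y_N-\kappa s_N^2(y)$ and subtract the linear term. Since $\tau(y_N)=0$, you have $\tau(H_\pm)=-\kappa\|y_N\|_2^2$, so with $\varphi(u)=e^u-1-u\ge 0$,
\[
\tau(\varphi(H_\pm))=\tau(e^{H_\pm})-\tau(\one)-\tau(H_\pm)\le\kappa\|y_N\|_2^2 .
\]
Because $H_\pm\ge\pm\theta y_N-5\kappa a^2\one$, the monotonicity \eqref{eq:ordered-tail} and Chebyshev's inequality for $\varphi$ (which is increasing on $[0,\infty)$) give
\[
\lambda_t(y_N)\le\frac{2\kappa\|y_N\|_2^2}{\varphi(\theta t-5\kappa a^2)}\qquad\text{whenever }\theta t>5\kappa a^2 .
\]
This is precisely your target tail, with the $L^2$ factor built in. Now take $\theta=\log p/(3a)$. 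Use $\lambda_t\le t^{-2}\|y_N\|_2^2$ for $t\le t_0=(5\kappa a^2+2)/\theta\le 4ap/\log p$, and $\varphi(u)\ge e^u/2$ for $u\ge 2$ beyond $t_0$; your integration then goes through.

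If you prefer to keep your $\cosh$ route, the ordinary Golden--Thompson inequality followed by Cauchy--Schwarz gives a valid substitute:
\[
\tau(e^{\pm\theta y_N/2})\le\tau(e^{\pm\theta y_N-\kappa s_N^2(y)})^{1/2}\,\tau(e^{\kappa s_N^2(y)})^{1/2}\le\tau(e^{\kappa s_N^2(y)})^{1/2}.
\]
This is your refined estimate with $\theta$ replaced by $\theta/2$, and it costs only an absolute factor in the constant.
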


\begin{proof} There is nothing to prove if $\|y_N\|_2^2=\tau(s_N^2(y))=0$. Therefore we assume that  $\|y_N\|_2^2>0$. For
$s\in\mathbb R$, define $\varphi(s)=e^s-s-1$. Then $\varphi$ is nonnegative on $\mathbb R$ and increasing on $[0,\infty)$. Put
$$\theta:=\frac{\log p}{3a},\qquad \kappa :=\frac{p-1-\log p}{9a^2},\qquad H_{\pm}:=\pm \theta y_N-\kappa s_N^2(y).$$ Then by Lemma \ref{lem:exp} and $\tau(y_N)=\tau(y_1)=0$, we have
\begin{equation}\label{varphi-x-N}\tau(\varphi(H_{\pm}))=\tau(e^{H_{\pm}})-\tau({\bf 1})-\tau(H_{\pm})\leq -\tau(H_{\pm}) = \kappa \|y_N\|_2^2.
\end{equation}
Let $t>0$ be such that $s:=\theta t-5\kappa a^2>0.$ By \eqref{eq:intrinsic-assumptions},
$$H_{\pm}\geq \pm \theta y_N-5\kappa a^2 {\bf 1}.$$
Hence,
\begin{align*}
\tau(\chi_{(t,\infty)}(\pm y_N))&=\tau(\chi_{(s,\infty)}(\pm\theta y_N-5\kappa a^2 {\bf 1}))\leq \tau(\chi_{(s,\infty)}(H_{\pm}))
\end{align*}
Applying Chebyshev's inequality and \eqref{varphi-x-N}, we arrive at
\begin{equation*}\label{equ:aftercheby}
\tau(\chi_{(t,\infty)}(\pm y_N))\leq \frac{\tau\varphi(H_{\pm})}{\varphi(s)}\leq  \frac{\kappa \|y_N\|_2^2}{\varphi(\theta t-5\kappa a^2)}.
\end{equation*}
Consequently,
$$\lambda_{t}(y_N)\leq \tau(\chi_{(t,\infty)}(y_N))+\tau(\chi_{(t,\infty)}(- y_N))\leq \frac{2\kappa \|y_N\|_2^2}{\varphi(\theta t-5\kappa a^2)}.$$
Set $t_0:=(5\kappa a^2+2)/\theta$. Note that  $\varphi(s)\geq e^s/2$ for  $s\geq 2$. Thus,
$$\lambda_{t}(y_N)\leq  4\kappa e^{5\kappa a^2-\theta t} \|y_N\|_2^2,\qquad t\geq t_0.$$
We now can estimate $\|y_N\|_p$ as follows:
\begin{equation}\label{equ:norm-of-x}
\begin{split}
\|y_N\|_p^p&=p \int_0^\infty t^{p-1}\lambda_{t}(y_N)\ dt\\
& \leq p\|y_N\|_2^2\int_0^{t_0} t^{p-3}\ dt + 4\kappa e^{5\kappa a^2}  p \|y_N\|_2^2  \int_{t_0}^\infty  e^{-\theta t} \ dt\\
&\leq \|y_N\|_2^2 \left\{\frac{p}{p-2}t_0^{p-2}+4\kappa e^{5\kappa a^2}\Gamma(p+1)\theta^{-p} \right\},
\end{split}
\end{equation}
where we use the Chebyshev inequality $\lambda_t(y_N)\leq t^{-2}\|y_N\|_2^2$ in the second inequality.
Note that
$$\kappa\leq \frac{p}{9a^2},\quad 5\kappa a^2\leq \frac{5p}{9},\quad t_0\leq\frac{a}{\log p}\left(\frac{5p}{3}+6\right)\leq 4a \frac{p}{\log p}.$$
Moreover, $\Gamma(p+1)\leq (2p)^p$ for $p>4$. Substituting these facts into \eqref{equ:norm-of-x}, we conclude the desired estimate
\eqref{eq:intrinsic-moment}.
\end{proof}

\section{A localization estimate}
We  construct in this section localized martingales to which the estimates of Section 3 apply. At each dyadic level, we truncate and center the original martingale differences and then use Cuculescu projections associated with the resulting conditioned square function to form a stopped approximation. The differences between consecutive approximations have bounded increments and bounded conditioned square functions at the corresponding scale. The main task is to control their \(L^2\)-energies by spectral tails of the original conditioned square function and the contributions of the original differences on dyadic spectral intervals.

Let $4\leq p<\infty$ and let $x=(x_n)_{1\leq n\leq N}$ be a finite self-adjoint $L^p$-martingale with $x_1=0$.  
 For any $1\leq n\leq N$ and any $j\in\mathbb Z$, we define
\begin{equation*}\label{eq:truncation}
 h_n^{(j)}=dx_n\ind_{[0,2^j]}(|dx_n|),
 \qquad
v_n^{(j)}=\sum_{k=1}^n h_k^{(j)}-\mathcal E_{k-1}(h_k^{(j)}).
\end{equation*}
Obviously, for any fixd $j\in\mathbb Z$, $v^{(j)}=(v_n^{(j)})_{1\leq n\leq N}$ forms a martingale with the difference sequence given by
$$dv_1^{(j)}=0, \quad dv_n^{(j)}=h_n^{(j)}-\mathcal E_{n-1}(h_n^{(j)})\,\,  (2\leq n\leq N).$$  The following properties of $v^{(j)}$ can be easily verified.

\begin{lemma}\label{basic-lem-1}
Fix $j\in\mathbb Z$. For any $1\leq n\leq N$,
\begin{enumerate}[\rm (i)]
\item $\|dv_n^{(j)}\|_\infty\leq2^{j+1};$
\item $\mathcal E_{n-1} [(dv_n^{(j)})^2]=\mathcal E_{n-1} [(h_n^{(j)})^2]-[\mathcal E_{n-1} (h_n^{(j)})]^2\leq 2^{2j}\one;$
\item $\mathcal E_{n-1} [(dv_n^{(j)})^2]\leq  \mathcal E_{n-1} [(dx_n)^2]$ and $s_N^2(v^{(j)})\leq  s_N^2(x).$
\end{enumerate}
\end{lemma}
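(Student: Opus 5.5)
The three items follow from elementary functional calculus, the bimodule property of $\mathcal E_{n-1}$, and Kadison--Schwarz type inequalities for conditional expectations. Throughout, $j$ and $n\ge 2$ are fixed; for $n=1$ we have $dv_1^{(j)}=0$ and everything is trivial. Note that $h_n^{(j)}=f(dx_n)$ with $f(s)=s\ind_{[0,2^j]}(|s|)$. Since $dx_n$ is self-adjoint, $h_n^{(j)}$ is self-adjoint, $|f(s)|\le 2^j$, and $f(s)^2=s^2\ind_{[0,2^j]}(|s|)\le s^2$.

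\emph{Item (i).} By functional calculus, $\|h_n^{(j)}\|_\infty\le 2^j$. Since $\mathcal E_{n-1}$ is a contraction on $\mathcal M$, also $\|\mathcal E_{n-1}(h_n^{(j)})\|_\infty\le 2^j$. The triangle inequality then gives $\|dv_n^{(j)}\|_\infty\le 2^{j+1}$.

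\emph{Item (ii), the identity.} Write $h=h_n^{(j)}$ and $g=\mathcal E_{n-1}(h)\in\mathcal M_{n-1}$, which is self-adjoint. Expand
\[
(h-g)^2=h^2-hg-gh+g^2 .
\]
Apply $\mathcal E_{n-1}$ and use the bimodule property (i) of conditional expectations: $\mathcal E_{n-1}(hg)=\mathcal E_{n-1}(h)g=g^2$, $\mathcal E_{n-1}(gh)=g^2$, and $\mathcal E_{n-1}(g^2)=g^2$. This yields
\[
\mathcal E_{n-1}[(dv_n^{(j)})^2]=\mathcal E_{n-1}(h^2)-g^2 .
\]

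\emph{Item (ii), the bound.} Since $g^2\ge 0$, the identity gives $\mathcal E_{n-1}[(dv_n^{(j)})^2]\le\mathcal E_{n-1}(h^2)$. As $h^2\le 2^{2j}\one$ and $\mathcal E_{n-1}$ is positive and unital, $\mathcal E_{n-1}(h^2)\le 2^{2j}\one$.

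\emph{Item (iii).} From (ii) we already have $\mathcal E_{n-1}[(dv_n^{(j)})^2]\le \mathcal E_{n-1}(h^2)$. The pointwise bound $f(s)^2\le s^2$ gives $h^2\le (dx_n)^2$. By positivity of $\mathcal E_{n-1}$ (extended to $L^1$, where $(dx_n)^2\in L^{p/2}$), we get $\mathcal E_{n-1}(h^2)\le\mathcal E_{n-1}[(dx_n)^2]$. For the square functions, sum over $2\le k\le N$ and use $dv_1^{(j)}=0$ together with $dx_1=x_1=0$. This gives $s_N^2(v^{(j)})\le s_N^2(x)$.

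No step is a real obstacle. The only point needing care is the use of the bimodule property in the cross terms when expanding the square. It requires $g\in\mathcal M_{n-1}$, which holds because $h\in\mathcal M$ is bounded, so $g$ is a bounded element of $\mathcal M_{n-1}$.
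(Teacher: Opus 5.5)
Your proposal is correct and is the routine verification the paper leaves to the reader: the paper only says these properties ``can be easily verified'' and gives no proof. Your argument fills this in. For (i) you use functional calculus and contractivity of $\mathcal E_{n-1}$. For (ii) you expand the square and use the bimodule property with $g=\mathcal E_{n-1}(h_n^{(j)})\in\mathcal M_{n-1}$. For (iii) you combine the pointwise bound $(h_n^{(j)})^2\le (dx_n)^2$ with positivity of $\mathcal E_{n-1}$ on $L^1$.
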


For each fixed $j\in \mathbb Z$,  we introduce the Cuculescu projections generated from the submartingale $(s_n^2(v^{(j)}))_{1\leq n \leq N}$: set $q_1^{(j)}=\one$ and inductively define
\begin{equation*}\label{eq:q-definition}
 q_n^{(j)}
 =\ind_{[0,2^{2j}]}
 \bigl(q_{n-1}^{(j)}s_n^2(v^{(j)}) q_{n-1}^{(j)}\bigr)q_{n-1}^{(j)},\qquad 2\leq n\leq N.
\end{equation*}
For any $2\leq n\leq N$, we set
$$p_n^{(j)}=q_{n-1}^{(j)}-q_n^{(j)}.$$
Then the projections $p_2^{(j)},\ldots,p_N^{(j)},q_N^{(j)}$ are mutually
orthogonal and sum to $\one$. We collect below some fundamental
properties of $(q_n^{(j)})$ and $(p_n^{(j)})$.

\begin{lemma}\label{lem-cuculescu} Fix $j\in\mathbb Z$. For any $2\leq n\leq N$,
\begin{enumerate}[\rm (i)]
\item $q_n^{(j)}\in\M_{n-1}$;
\item $q_n^{(j)}\leq q_{n-1}^{(j)}$;
\item $q_n^{(j)}s_n^2(v^{(j)}) q_n^{(j)}\leq 2^{2j} q_n^{(j)};$
\item $p_n^{(j)}s_n^2(v^{(j)})p_n^{(j)}\geq 2^{2j}p_n^{(j)};$
\item $\tau(\one-q_N^{(j)})\leq 2^{1-2j}\tau\big(s_N^2(v^{(j)})\one_{(2^{2j-1},\infty)}(s_N^2(v^{(j)}))\big).$
\end{enumerate}
\end{lemma}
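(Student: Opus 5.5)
Properties (i)--(iv) follow from the recursive definition by induction on $n$, and (v) is a weak-type estimate comparing each $p_n^{(j)}$ with a spectral projection of $s_N^2(v^{(j)})$. Throughout we fix $j$ and abbreviate $s_n^2=s_n^2(v^{(j)})$, $q_n=q_n^{(j)}$, $p_n=p_n^{(j)}$, $\lambda=2^{2j}$.

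\emph{Properties (i)--(iv).} Since $s_n^2=\sum_{k\le n}\mathcal E_{k-1}[(dv_k^{(j)})^2]$, we have $s_n^2\in L^1(\M_{n-1})$, i.e.\ the conditioned square function is predictable. Assume inductively that $q_{n-1}\in\M_{n-2}\subset\M_{n-1}$. Then $q_{n-1}s_n^2q_{n-1}$ is a self-adjoint element affiliated with $\M_{n-1}$, so its spectral projection $\ind_{[0,\lambda]}(q_{n-1}s_n^2q_{n-1})$ lies in $\M_{n-1}$. The operator $q_{n-1}s_n^2q_{n-1}$ commutes with $q_{n-1}$, hence so does this spectral projection. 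It follows that $q_n$ is a projection in $\M_{n-1}$ with $q_n\le q_{n-1}$, which gives (i) and (ii).

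For (iii), $q_n$ is a spectral projection of $A:=q_{n-1}s_n^2q_{n-1}$ that is dominated by $q_{n-1}$, and $q_nq_{n-1}=q_n$. Therefore $q_ns_n^2q_n=q_nAq_n\le\lambda q_n$.

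For (iv), $p_n=q_{n-1}\ind_{(\lambda,\infty)}(A)$ is the part of $q_{n-1}$ on which $A>\lambda$. Hence $p_nAp_n\ge\lambda p_n$. Since $p_n\le q_{n-1}$, we have $p_nAp_n=p_ns_n^2p_n$, which gives (iv). Strictly, the inequality should be read on the range of $p_n$, with $s_n^2$ possibly unbounded but in $L^1$; this causes no difficulty.

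\emph{Property (v).} The projections $p_n$ are mutually orthogonal, so $\tau(\one-q_N)=\sum_{n=2}^N\tau(p_n)$. Since $(s_n^2)_n$ is increasing in $n$, $s_N^2\ge s_n^2$, and by (iv)
\[
\sum_{n}\tau(p_n s_N^2)\ \ge\ \sum_n \tau(p_ns_n^2p_n)\ \ge\ \lambda\,\tau(\one-q_N).
\]
Now set $P=\one-q_N$ and $e=\ind_{(\lambda/2,\infty)}(s_N^2)$. We split
\[
\tau(Ps_N^2)=\tau\bigl(Ps_N^2(\one-e)\bigr)+\tau\bigl(Ps_N^2e\bigr).
\]
Since $s_N^2$ commutes with $e$, the first term equals $\tau\bigl(P^{1/2}\dots\bigr)$ in the sense that $(\one-e)s_N^2(\one-e)\le(\lambda/2)(\one-e)$, so it is at most $\tfrac\lambda2\tau(P)$. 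Since $s_N^2e\ge0$ and $P\le\one$, the second term is at most $\tau(s_N^2e)$. Combining,
\[
\lambda\tau(P)\le\tfrac\lambda2\tau(P)+\tau(s_N^2e),\qquad\text{so}\qquad \tau(P)\le 2\lambda^{-1}\tau(s_N^2e)=2^{1-2j}\tau(s_N^2e),
\]
which is exactly (v).

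The only delicate step is the trace manipulation in (v), namely bounding $\tau(Ps_N^2(\one-e))\le\tfrac\lambda2\tau(P)$ in the noncommutative setting. It works because $\one-e$ commutes with $s_N^2$: writing $b=s_N^2(\one-e)$ gives $0\le b\le\tfrac\lambda2\one$, and then $\tau(Pb)=\tau(b^{1/2}Pb^{1/2})\le\tau(P)\|b\|_\infty$. Similarly, $\tau(Ps_N^2e)=\tau\bigl((s_N^2e)^{1/2}P(s_N^2e)^{1/2}\bigr)\le\tau(s_N^2e)$.
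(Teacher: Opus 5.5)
Your proof is correct and follows the paper's argument for (v): sum (iv) over $n$, use $s_n^2\le s_N^2$, split $s_N^2$ at $2^{2j-1}$ and absorb. For (i)--(iv), which the paper only cites, your direct verification via the commutation of $\one_{[0,2^{2j}]}(q_{n-1}s_n^2q_{n-1})$ with $q_{n-1}$ is the standard one, and your choice $\one-e=\one_{[0,2^{2j-1}]}(s_N^2)$ yields the open interval $(2^{2j-1},\infty)$ exactly as stated, whereas the paper's split at $[0,2^{2j-1})$ literally gives the closed one.
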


\begin{proof} Properties (i)-(iv) are now well-known; see e.g. \cite{R1,R21} for the proof. Therefore, we only check (v) here. For simplicity, we suppress the superscript $(j)$.
 By (iv), $p_ns_n^2(v)p_n\geq 2^{2j}p_n.$ Therefore,
\begin{align*}
\tau(\one-q_N)=\sum_{n=2}^N \tau(p_n)&\leq 2^{-2j} \sum_{n=2}^N\tau(p_ns_n^2(v)p_n)\\
&\leq 2^{-2j} \sum_{n=2}^N\tau(p_ns_N^2(v)p_n)=2^{-2j}\tau((\one-q_N)s_N^2(v))
\end{align*}
Splitting $s_N^2(v)$ at $2^{2j-1}$, we get
\begin{align*}
 \tr((\one-q_N)s_N^2(v))&=\tr((1-q_N)s_N^2(v)\one_{[0,2^{2j-1})}(s_N^2(v)))\\
 &\qquad+\tr((\one-q_N)s_N^2(v)\one_{[2^{2j-1},\infty)}(s_N^2(v)))\\
 &\leq2^{2j-1}\tr(\one-q_N)+\tr(s_N^2(v)\one_{[2^{2j-1},\infty)}(s_N^2(v))).
\end{align*}
Combining the above two estimates, we conclude
$$\tau(\one-q_N)\leq \frac12 \tau(\one-q_N)+2^{-2j}\tr(s_N^2(v)\one_{[2^{2j-1},\infty)}(s_N^2(v))),$$
which implies the desired assertion.
\end{proof}

%
Obviously, the sequence
$(q_{n-1}^{(j)}dv_n^{(j)}q_{n-1}^{(j)})_{1\leq n\leq N}$ forms a martingale difference sequence. The associated martingale $w^{(j)}=(w_n^{(j)})_{1\leq n\leq N}$ is given by
\begin{equation*}\label{eq:stopped-martingale}
 w_1^{(j)}=0,\qquad w_n^{(j)}=\sum_{k=2}^n q_{k-1}^{(j)}dv_k^{(j)}q_{k-1}^{(j)}.
\end{equation*}
Then we find the following useful properties of $w^{(j)}$.

\begin{lemma}\label{lem:stopping}
For every $j\in\mathbb Z$, we have
\begin{enumerate}[\rm (i)]
\item $s_N^2(w^{(j)})\leq 2^{2j+1}\one;$
\item $q_N^{(j)}w_N^{(j)}q_N^{(j)}=q_N^{(j)}v_N^{(j)}q_N^{(j)}.$
\end{enumerate}
\end{lemma}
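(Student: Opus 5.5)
We suppress the superscript $(j)$ and write $q_n,\,dv_n,\,w_n$. Both parts follow from the properties of the Cuculescu projections in Lemma \ref{lem-cuculescu} together with the bounds on $dv_n$ in Lemma \ref{basic-lem-1}.

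For (i), we start from the definition of the conditioned square function. Since $q_{k-1}\in\M_{k-2}\subset\M_{k-1}$ by Lemma \ref{lem-cuculescu}(i), we have
$$\mathcal E_{k-1}\big[(q_{k-1}dv_kq_{k-1})^2\big]=q_{k-1}\,\mathcal E_{k-1}\big[dv_k\,q_{k-1}\,dv_k\big]\,q_{k-1}\leq q_{k-1}\,\mathcal E_{k-1}\big[(dv_k)^2\big]\,q_{k-1}.$$
Summing over $2\le k\le N$, we split each term according to
$$\mathcal E_{k-1}[(dv_k)^2]=\big(s_k^2(v)-s_{k-1}^2(v)\big).$$
The key point is that $q_{k-1}$ controls $s_{k-1}^2(v)$ through (iii) applied at step $k-1$, and the last increment is bounded by (ii) of Lemma \ref{basic-lem-1}. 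Concretely, for each $k$,
$$q_{k-1}\,\mathcal E_{k-1}[(dv_k)^2]\,q_{k-1}\le q_{k-1}\mathcal E_{k-1}[(dv_k)^2]q_{k-1}.$$
Rather than bounding term by term, we use monotonicity $q_{k-1}\le q_{m}$ for $m\le k-1$. Write $s_N^2(w)\le\sum_k q_{k-1}\mathcal E_{k-1}[(dv_k)^2]q_{k-1}$ and decompose $\one=\sum_{n=2}^N p_n+q_N$. Because the $q_{k-1}$ are decreasing and commute with the $p_n$ (each $p_n$ is a subprojection of $q_{n-1}$ orthogonal to $q_n$), we get $p_nq_{k-1}=p_n$ for $k\le n$ and $p_nq_{k-1}=0$ for $k>n$.

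We bound $s_N^2(w)$ via its quadratic form: for a unit vector $\xi$, $\langle s_N^2(w)\xi,\xi\rangle\le\sum_k\langle \mathcal E_{k-1}[(dv_k)^2]q_{k-1}\xi,q_{k-1}\xi\rangle$. A cleaner route is to show
$$\sum_{k=2}^{n} q_{k-1}\mathcal E_{k-1}[(dv_k)^2]q_{k-1}\le q_{n-1}s_{n-1}^2(v)q_{n-1}\text{-type bound}+2^{2j}\one$$
by induction on $n$. The main obstacle is that the $q_{k-1}$ vary with $k$, so the partial sums do not telescope under a single compression.

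We would handle this using the standard Cuculescu estimate
$$\sum_{k\le n}q_{k-1}\,(s_k^2-s_{k-1}^2)\,q_{k-1}\le \sum_{k\le n} q_{k-1}(\cdot)q_{k-1}.$$
The sum up to index $k$ involving $s_{k-1}^2$ is controlled by $q_{k-1}s_{k-1}^2q_{k-1}\le 2^{2j}q_{k-1}$, and the new increment is at most $2^{2j}\one$. This yields the total bound $2^{2j}+2^{2j}=2^{2j+1}$.

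Concretely, the first bound is obtained by writing $q_{k-1}=\sum_{m\ge k}p_m+q_N$ and regrouping by $p_m$. For each fixed $m$, the terms with $k\le m$ contribute $\sum_{k\le m}p_m\mathcal E_{k-1}[(dv_k)^2]p_m$. Here one uses that $p_m\le q_{m-1}$, so that by (iii), $p_ms_{m-1}^2(v)p_m\le 2^{2j}p_m$, while the single term $k=m$ is bounded by $2^{2j}$ from Lemma \ref{basic-lem-1}(ii). The cross terms $p_m(\cdot)p_{m'}$ are the delicate point; I expect to need an operator-convexity/Cauchy–Schwarz step, $(a+b)^*(a+b)\le 2a^*a+2b^*b$, which is likely why the constant is $2^{2j+1}$ rather than $2^{2j}$.

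For (ii), observe that $q_N\le q_{k-1}$ for all $k\le N+1$, so $q_Nq_{k-1}=q_N=q_{k-1}q_N$. Hence
$$q_Nw_Nq_N=\sum_{k=2}^Nq_Nq_{k-1}dv_kq_{k-1}q_N=\sum_{k=2}^Nq_Ndv_kq_N=q_Nv_Nq_N,$$
using $dv_1=0$. This part is immediate.
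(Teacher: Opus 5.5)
\textbf{Part (ii)} is correct and is exactly the paper's argument. \textbf{Part (i)} has a genuine gap.

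Your reduction $\mathcal E_{k-1}[(dw_k)^2]\le q_{k-1}\mathcal E_{k-1}[(dv_k)^2]q_{k-1}$ is correct. So is your diagonal analysis. Write $T=\sum_{k=2}^N q_{k-1}\mathcal E_{k-1}[(dv_k)^2]q_{k-1}$. Then $p_mTp_m=p_ms_m^2(v)p_m\le 2^{2j}p_m+2^{2j}p_m$, and $q_NTq_N=q_Ns_N^2(v)q_N\le 2^{2j}q_N$.

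But you never resolve the off-diagonal blocks, and they are the entire difficulty. A positive operator whose diagonal blocks, with respect to $N$ orthogonal projections, are each bounded by $c$ can have norm as large as $Nc$; the $N\times N$ all-ones matrix is an example. So diagonal bounds combined with a generic inequality such as $(a+b)^*(a+b)\le 2a^*a+2b^*b$ cannot give an $N$-independent constant. Some structural information about the cross terms is indispensable.

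The factor $2$ in $2^{2j+1}$ comes from the diagonal blocks ($2^{2j}$ for $s_{m-1}^2(v)$ plus $2^{2j}$ for the last increment), not from cross terms. Also, several displayed inequalities in your middle paragraphs are tautologies of the form $X\le X$ and carry no content.

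The missing idea is that the cross terms vanish identically. For $m<m'$ you have $p_mTp_{m'}=p_ms_m^2(v)p_{m'}$. Since $p_m\le q_{m-1}$ and $p_{m'}\le q_m\le q_{m-1}$,
$$p_ms_m^2(v)p_{m'}=p_m\big(q_{m-1}s_m^2(v)q_{m-1}\big)q_m\,p_{m'}.$$
By construction $q_m=\one_{[0,2^{2j}]}\big(q_{m-1}s_m^2(v)q_{m-1}\big)q_{m-1}$. This is a product of two commuting projections that both commute with $X_m:=q_{m-1}s_m^2(v)q_{m-1}$, so $q_m$ commutes with $X_m$. Moving $q_m$ to the left and using $p_mq_m=0$ gives $p_mTp_{m'}=0$. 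The same argument gives $p_mTq_N=p_m X_m q_N=0$, since $q_N\le q_m$. Hence $T$ is block diagonal with respect to $p_2,\dots,p_N,q_N$, and $T\le 2^{2j+1}\one$.

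The paper packages this commutation as $q_{n-1}s_n^2(v)q_{n-1}=p_ns_n^2(v)p_n+q_ns_n^2(v)q_n$. Contrary to your concern, this makes the partial sums telescope exactly, into $\sum_{n=2}^m p_ns_n^2(v)p_n+q_ms_m^2(v)q_m$.
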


\begin{proof}
Fix $j\in\mathbb Z$. For simplicity, we suppress the superscript $(j)$.
Since  $q_N\leq q_{n-1}$ for every $2 \leq n\leq N$, the second item follows immediately. Let us now verify (i).   Since $q_n$ is a spectral projection of
$q_{n-1}s_n^2(v)q_{n-1}$ in the algebra $q_{n-1}\mathcal M_{n-1} q_{n-1}$, we know that $q_n$ commutes with $q_{n-1}s_n^2(v)q_{n-1}$. Moreover, by Lemma \ref{lem-cuculescu}(ii), $q_n\leq q_{n-1}$. Therefore,
\[
 q_{n-1}s_n^2(v)q_{n-1}=p_n s_n^2(v) p_n+q_n s_n^2(v) q_n.
\]
Subtracting $q_{n-1}s_{n-1}^2(v) q_{n-1}$ and summing gives the exact identity
\begin{equation}\label{eq:stopping-identity}
 \sum_{n=2}^m q_{n-1}\mathcal E_{n-1} [(dv_n)^2]q_{n-1}
 =\sum_{n=2}^m p_n s_n^2(v)p_n+q_ms_m^2(v)q_m,
 \qquad 2\leq m\leq N.
\end{equation}
By Lemma \ref{lem-cuculescu}(iii), the second term on the right is bounded by $2^{2j}q_m$.  Moreover, it follows from Lemma \ref{lem-cuculescu}(iii) and Lemma \ref{basic-lem-1}(ii) that
\[
 q_{n-1}s_n^2(v)q_{n-1}
 =q_{n-1}s_{n-1}^2(v)q_{n-1}+q_{n-1}\mathcal E_{n-1}[(dv_n)^2]q_{n-1}
 \leq 2^{2j+1}q_{n-1},
\]
and so $$p_ns_n^2(v)p_n=p_nq_{n-1}s_n^2(v)q_{n-1}p_n\leq 2^{2j+1} p_n.$$ Hence the left-hand side of
\eqref{eq:stopping-identity}, with $m=N$, is at most $2^{2j+1}\one$; namely,
\begin{equation}\label{eq:sum}
\sum_{n=2}^N q_{n-1}\mathcal E_{n-1} [(dv_n)^2]q_{n-1}\leq 2^{2j+1}\one.
\end{equation}
For a self-adjoint operator $z$ and a projection $q$, it is clear that
\[
        qz^2q-(qzq)^2=qz(\one-q)zq\geq0.
\]
Hence,
$$(dw_n)^2=(q_{n-1}dv_nq_{n-1})^2\leq q_{n-1}(dv_n)^2 q_{n-1}$$
After applying $\mathcal E_{n-1}$, this shows that
$$\mathcal E_{n-1} [(dw_n)^2] \leq q_{n-1}\mathcal E_{n-1} [(dv_n)^2]q_{n-1},$$
which combined with \eqref{eq:sum} gives the desired assertion (i) (recalling that $dw_1=0$).
\end{proof}

Now, consider the martingale $y^{(j)}=(y_n^{(j)})_{1\leq n\leq N}$ with the difference sequence given by
$$dy_1^{(j)}=0,\quad dy_n^{(j)}=dw_n^{(j)}-dw_n^{(j-1)}\,\,(2\leq n\leq N).$$
Set $e_N^{(j)}:=1-q_N^{(j)}$. Then the following assertions hold.

\begin{proposition}\label{estimate:y}
For any $j\in \mathbb Z$,
\begin{equation}\label{eq:1}
\sup_{1\leq n\leq N}\|dy_n^{(j)}\|_\infty \leq 3\cdot 2^{j},\qquad s_N^2(y)\leq 5\cdot 2^{2j}\one.
\end{equation}
Moreover,
\begin{equation}\label{eq:2}\|y_N^{(j)}\|_2^2\leq 10\cdot 2^{2j}\big(\tau (e_N^{(j)})+\tau(e_N^{(j-1)})\big)+\sum_{k=1}^N\tau\left((dx_k)^2\one_{(2^{j-1
},2^j]}(|dx_k|)\right)
\end{equation}
and
\begin{equation}\label{eq:3}\|y_N^{(j)}\|_2^2\leq 10^2\tau\Big(s_N^2(x)\one_{(2^{2j-3},\infty)}(s_N^2(x))\Big)+\sum_{k=1}^N\tau\left((dx_k)^2\one_{(2^{j-1
},2^j]}(|dx_k|)\right).
\end{equation}
\end{proposition}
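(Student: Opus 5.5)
The plan is to prove \eqref{eq:1} by direct operator-norm bookkeeping. Then I prove \eqref{eq:2} by an orthogonal splitting along a projection on which both stopped martingales agree with the unstopped ones. Finally, \eqref{eq:3} follows from \eqref{eq:2} and Lemma \ref{lem-cuculescu}(v).

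\emph{Step 1: \eqref{eq:1}.} Since $dy_n^{(j)}=q_{n-1}^{(j)}dv_n^{(j)}q_{n-1}^{(j)}-q_{n-1}^{(j-1)}dv_n^{(j-1)}q_{n-1}^{(j-1)}$, Lemma \ref{basic-lem-1}(i) gives
$$\|dy_n^{(j)}\|_\infty\le 2^{j+1}+2^{j}=3\cdot 2^j.$$
For the conditioned square function I use the operator inequality $(a-b)^2\le 2a^2+2b^2$ for self-adjoint $a,b$, followed by $\mathcal E_{n-1}$, and sum over $n$. Then Lemma \ref{lem:stopping}(i) at levels $j$ and $j-1$ gives
$$s_N^2(y^{(j)})\le 2s_N^2(w^{(j)})+2s_N^2(w^{(j-1)})\le 2\cdot 2^{2j+1}+2\cdot 2^{2j-1}=5\cdot 2^{2j}.$$

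\emph{Step 2: \eqref{eq:2}.} Put $f=q_N^{(j)}\wedge q_N^{(j-1)}$ and $e=\one-f$, so that $\tau(e)\le \tau(e_N^{(j)})+\tau(e_N^{(j-1)})$. Since $f\le q_N^{(j)}$ and $f\le q_N^{(j-1)}$, Lemma \ref{lem:stopping}(ii) gives $fy_N^{(j)}f=f(v_N^{(j)}-v_N^{(j-1)})f$. The pieces $fzf$ and $z-fzf$ are orthogonal in $L^2$, so
$$\|y_N^{(j)}\|_2^2=\|fy_N^{(j)}f\|_2^2+\|y_N^{(j)}-fy_N^{(j)}f\|_2^2 .$$
For the first term, compression by $f$ is an $L^2$-contraction. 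Moreover, $v^{(j)}-v^{(j-1)}$ is the martingale with differences $g_k-\mathcal E_{k-1}g_k$, where $g_k=dx_k\one_{(2^{j-1},2^j]}(|dx_k|)$. By orthogonality of martingale differences and $\|g-\mathcal E_{k-1}g\|_2\le\|g\|_2$, this term is at most $\sum_k\tau\bigl((dx_k)^2\one_{(2^{j-1},2^j]}(|dx_k|)\bigr)$. This yields exactly the second summand in \eqref{eq:2}, with coefficient $1$.

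For the second term, Lemma \ref{basic:proj} reduces matters to showing $\tau(e(y_N^{(j)})^2)\le 5\cdot 2^{2j}\tau(e)$. This is the main obstacle, because $y_N^{(j)}$ is not bounded in operator norm and $e$ is not adapted. My first attempt is to split $e\le e_N^{(j)}+e_N^{(j-1)}$, more precisely to split at the level of each stopped martingale separately. I then decompose $e_N^{(j)}=\sum_n p_n^{(j)}$ with $p_n^{(j)}\in\mathcal M_{n-1}$, and similarly at level $j-1$. Since $p_n^{(j)}$ is orthogonal to $q_{k-1}^{(j)}$ for $k>n$, the level-$j$ differences $dw_k^{(j)}$ vanish under compression by $p_n^{(j)}$ after time $n$. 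The relevant quantity is therefore $\tau\bigl(p_n^{(j)}(w_N^{(j)})^2\bigr)$. By adaptedness and the martingale property, only the conditioned increments survive, so this is controlled by $\tau\bigl(p_n^{(j)}s_N^2(w^{(j)})\bigr)\le 2^{2j+1}\tau(p_n^{(j)})$ via Lemma \ref{lem:stopping}(i). The cross-level pieces ($p_n^{(j)}$ against $w^{(j-1)}$, and vice versa) are handled by the same conditioning argument using Step 1. This produces a bound $C\,2^{2j}\bigl(\tau(e_N^{(j)})+\tau(e_N^{(j-1)})\bigr)$, and I expect to tune the splitting so that $C=10$.

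\emph{Step 3: \eqref{eq:3}.} By Lemma \ref{lem-cuculescu}(v),
$$2^{2j}\tau(e_N^{(j)})\le 2\,\tau\bigl(s_N^2(v^{(j)})\one_{(2^{2j-1},\infty)}(s_N^2(v^{(j)}))\bigr).$$
To pass from $s_N^2(v^{(j)})$ to $s_N^2(x)$ I use Lemma \ref{basic-lem-1}(iii), $s_N^2(v^{(j)})\le s_N^2(x)$, together with the pointwise bound $t\one_{(a,\infty)}(t)\le 2(t-a/2)_+$. The function $(t-a/2)_+$ is increasing and convex, hence trace-monotone, and $2(t-a/2)_+\le 2t\one_{(a/2,\infty)}(t)$. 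This gives
$$\tau\bigl(s_N^2(v^{(j)})\one_{(a,\infty)}(s_N^2(v^{(j)}))\bigr)\le 2\tau\bigl(s_N^2(x)\one_{(a/2,\infty)}(s_N^2(x))\bigr).$$
Applying this at level $j$ (threshold $2^{2j-2}$) and at level $j-1$ (threshold $2^{2j-4}$, weighted by $2^{2j}=4\cdot 2^{2(j-1)}$) lands all thresholds at or above $2^{2j-3}$ after adjusting by the monotone factor. Collecting constants then gives a coefficient below $10^2$.
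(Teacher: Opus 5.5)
Your Step 1 is the paper's argument, and your bound $\|fy_N^{(j)}f\|_2^2\le\sum_k\tau\bigl((dx_k)^2\mathbf 1_{(2^{j-1},2^j]}(|dx_k|)\bigr)$ is fine. The gap is in the other half of Step 2. You reduce to $\tau\bigl(e\,(y_N^{(j)})^2\bigr)\le 5\cdot 2^{2j}\tau(e)$ for the terminal projection $e=\one-q_N^{(j)}\wedge q_N^{(j-1)}$, and this inequality is false even for commutative martingales, so no tuning of the splitting can rescue it. Here is a counterexample in a commutative filtration:
\begin{itemize}
\item For $2\le k\le m+1$ let $dx_k=\pm2^j$ with probability $\delta/2$ each and $0$ otherwise, independently, where $m\delta\le 1/2$ and $m\ge K$.
\item Let $A=\{|x_{m+1}|\ge K2^j\}$, an event of positive probability.
\item Let $dx_{m+2},dx_{m+3}$ be $\pm2^{j-1}\mathbf 1_A$ with independent fair signs, and put $N=m+3$.
\end{itemize}
Truncation at level $j$ does nothing, and $v^{(j-1)}$ only sees the last two steps. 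Since $s_N^2(v^{(j)})\le m\delta 2^{2j}+2^{2j-1}\le 2^{2j}$, all $q_n^{(j)}=\one$. Since $s_{m+2}^2(v^{(j-1)})=2^{2j-2}\mathbf 1_A$ and $s_{m+3}^2(v^{(j-1)})=2^{2j-1}\mathbf 1_A$, we get $q_n^{(j-1)}=\one$ for $n<N$ and $q_N^{(j-1)}=\mathbf 1_{A^c}$. Hence $e=\mathbf 1_A$, $w_N^{(j)}=x_N$ and $w_N^{(j-1)}=dx_{m+2}+dx_{m+3}$, so $y_N^{(j)}=x_{m+1}$ and
\[
\|y_N^{(j)}-fy_N^{(j)}f\|_2^2=\tau\bigl(e\,(y_N^{(j)})^2\bigr)=\tau\bigl(\mathbf 1_A\,x_{m+1}^2\bigr)\ge K^2\,2^{2j}\,\tau(e),
\]
with $K$ arbitrary. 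Inequality \eqref{eq:2} itself still holds here. The reason is that $\tau(\mathbf 1_A x_{m+1}^2)$ is part of the truncation energy $m\delta 2^{2j}$, which your first term has already spent in full. Splitting the terminal value by the terminal projection charges the whole pre-stopping history to the exceptional part.

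The sketch breaks at ``only the conditioned increments survive.'' From $p_n^{(j)}\in\mathcal M_{n-1}$ you only get
\[
\tau\bigl(p_n^{(j)}(w_N^{(j)})^2\bigr)=\tau\bigl(p_n^{(j)}(w_{n-1}^{(j)})^2\bigr)+\tau\bigl(p_n^{(j)}\mathcal E_{n-1}[(dw_n^{(j)})^2]\bigr).
\]
In the first term, the terms $\tau\bigl(p_n^{(j)}dw_k^{(j)}dw_l^{(j)}\bigr)$ with $k,l<n$ cannot be conditioned, because $p_n^{(j)}\notin\mathcal M_{\max(k,l)-1}$. For the cross-level pieces (in the example, $e_N^{(j-1)}$ against $w^{(j)}$) even the cut-off $p_nw_N=p_nw_n$ is unavailable. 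Also, $e\le e_N^{(j)}+e_N^{(j-1)}$ fails as an operator inequality for noncommuting projections; only the trace version holds.

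The missing idea is to localize each increment with an adapted projection, as the paper does:
\begin{itemize}
\item Use orthogonality first: $\|y_N^{(j)}\|_2^2=\sum_n\|\Delta_n\|_2^2$ with $\Delta_n=dw_n^{(j)}-dw_n^{(j-1)}$.
\item Split each $\Delta_n$ by $r_n=q_n^{(j)}\wedge q_n^{(j-1)}\in\mathcal M_{n-1}$. On $r_n$ one has $r_n\Delta_nr_n=r_n(dv_n^{(j)}-dv_n^{(j-1)})r_n$, which gives the truncation term.
\item Lemma \ref{basic:proj} bounds the rest by $4\tau\bigl(f_n[(dw_n^{(j)})^2+(dw_n^{(j-1)})^2]\bigr)$ with $f_n=\one-r_n\in\mathcal M_{n-1}$. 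Now you may condition on $\mathcal M_{n-1}$ and enlarge $f_n$ to $f_N$.
\item Summing gives $4\tau\bigl(f_N[s_N^2(w^{(j)})+s_N^2(w^{(j-1)})]\bigr)\le10\cdot2^{2j}\tau(f_N)$.
\end{itemize}

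There is also a smaller slip in Step 3. Halving the threshold is unnecessary and spoils your bookkeeping. At level $j-1$ it produces the threshold $2^{2j-4}<2^{2j-3}$, which cannot be raised afterwards, and the constants come to $40+160=200$, not below $10^2$. The function $t\mapsto t\mathbf 1_{(a,\infty)}(t)$ is itself nondecreasing. So the layer-cake formula, \eqref{eq:ordered-tail} and $s_N^2(v^{(i)})\le s_N^2(x)$ give directly
\[
\tau\bigl(s_N^2(v^{(i)})\mathbf 1_{(a,\infty)}(s_N^2(v^{(i)}))\bigr)\le\tau\bigl(s_N^2(x)\mathbf 1_{(a,\infty)}(s_N^2(x))\bigr).
\]
Combined with Lemma \ref{lem-cuculescu}(v), this yields the constant $20+80=100$ with thresholds $2^{2j-1}$ and $2^{2j-3}$, as required.
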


\begin{proof}
Applying Lemma \ref{basic-lem-1}(i), we obtain for any $1\leq n\leq N$,
\begin{align*}\|dy_n^{(j)}\|_\infty&\leq \|dw_n^{(j)}\|_\infty+\|dw_n^{(j-1)}\|_\infty\\
&=\|q_{n-1}^{(j)}dv_n^{(j)}q_{n-1}^{(j)}\|_\infty+\|q_{n-1}^{(j-1)}dv_n^{(j-1)}q_{n-1}^{(j-1)}\|_\infty \\
&\leq \|dv_n^{(j)}\|_\infty+\|dv_n^{(j-1)}\|_\infty
\leq 2^{j+1}+2^j=3\cdot 2^j.
\end{align*}
On the other hand, for self-adjoint $a$, $b$, one has $(a-b)^2\leq 2a^2+2b^2$. Combining this  with  Lemma \ref{lem:stopping}(i), we get
$$s_N^2(y)\leq 2( s_N^2(w^{(j)})+s_N^2(w^{(j-1)}))\leq 2(2^{2j+1}\one + 2^{2j-1}\one) =5\cdot 2^j\one.$$
The assertion \eqref{eq:1} is verified.

We turn to prove \eqref{eq:2}. For $1\leq n\leq N$, put
$$r_n=q_n^{(j)}\wedge q_{n}^{(j-1)},\qquad f_n=\one-r_n.$$
The projections $r_n$ decrease in $n$ and the  $f_n$ increase. Furthermore,
\begin{equation}
\tau(f_N)=\tau(\one-r_N)\leq \tau(e_N^{(j)})+\tau(e_N^{(j-1)}).
\end{equation}
Write $$\Delta_n^j:=dw_n^{(j)}-dw_n^{(j-1)},\qquad \widetilde \Delta_n^j:=dv_n^{(j)}-dv_n^{(j-1)}.$$
Since $r_n\leq q_n^{(j)}$, $r_n\leq q_n^{(j-1)}$, we have $$r_n \Delta_n^j r_n=r_n \widetilde \Delta_n^jr_n.$$
Thus, by orthogonality, the above identity, the fact $(a-b)^2\leq 2a^2+2b^2$ for self-adjoint operators $a$, $b$, and Lemma \ref{basic:proj},
\begin{align*}
\|\Delta_n^j\|_2^2&=\|\Delta_n^j-r_n\Delta_n^jr_n\|_2^2+\| r_n\Delta_n^jr_n\|_2^2\\
& = \|(dw_n^{(j)}-r_ndw_n^{(j)}r_n)-(dw_n^{(j-1)}-r_ndw_n^{(j-1)}r_n)\|_2^2 +\| r_n \widetilde \Delta_n^j r_n\|_2^2\\
&\leq 2\|dw_n^{(j)}-r_ndw_n^{(j)}r_n\|_2^2 + 2\|dw_n^{(j-1)}-r_ndw_n^{(j-1)}r_n\|_2^2+\|\widetilde \Delta_n^j\|_2^2\\
&\leq 4\tau \big[f_{n-1}\big((dw_n^{(j)})^2 + (dw_n^{(j-1)})^2\big)\big]+\|\widetilde \Delta_n^j\|_2^2.
\end{align*}
Because $f_{n-1}\in \mathcal M_{n-1}$ and $f_{n-1}\leq f_N$, we further have
\begin{align*}
\|\Delta_n^j\|_2^2 &\leq 4 \tau \big[f_{n-1}\big(\mathcal E_{n-1}(dw_n^{(j)})^2+\mathcal E_{n-1}(dw_n^{(j-1)})^2 \big)\big]+\|\widetilde \Delta_n^j\|_2^2\\
& \leq 4 \tau \big[f_N\big(\mathcal E_{n-1}(dw_n^{(j)})^2+\mathcal E_{n-1}(dw_n^{(j-1)})^2 \big)\big]+\|\widetilde \Delta_n^j\|_2^2.
\end{align*}
Taking the sum over $n$ gives
$$\|y_N^{(j)}\|_2^2=\sum_{n=1}^N \|\Delta_n^j\|_2^2\leq 4\tau\big[f_{N}\big(s_N^2(w^{(j)})+s_N^2(w^{(j-1)})\big)\big] +\sum_{n=1}^N\|\widetilde\Delta_n^j\|_2^2.$$
According to Lemma \ref{lem:stopping}(i), $s_N^2(w^{(j)})\leq 2^{2j+1}\one$, $s_N^2(w^{(j-1)})\leq 2^{2j-1}\one$. Hence,
\begin{equation}\label{eq:leftdelta}
\|y_N^{(j)}\|_2^2\leq 10 \cdot 2^{2j}\tau(f_{N}) +\sum_{n=1}^N\|\widetilde\Delta_n^j\|_2^2.
\end{equation}
Note that $$\widetilde\Delta_n^j=dv_n^{(j)}-dv_n^{(j-1)}=dx_n\ind_{(2^{j-1},2^j]}(|dx_n|)-\mathcal E_{n-1}[dx_n\ind_{(2^{j-1},2^j]}(|dx_n|)].$$
Thus,
\begin{equation}\label{eq:widetilde}
\begin{split}
\|\widetilde\Delta_n^j\|_2^2&= \| dx_n\ind_{(2^{j-1},2^j]}(|dx_n|)\|_2^2-\|\mathcal E_{n-1}[dx_n\ind_{(2^{j-1},2^j]}(|dx_n|)]\|_2^2\\
&\leq \| dx_n\ind_{(2^{j-1},2^j]}(|dx_n|)\|_2^2.
\end{split}
\end{equation}
Plugging \eqref{eq:widetilde} into \eqref{eq:leftdelta}, we obtain \eqref{eq:2}.

Furthermore, using Lemma \ref{lem-cuculescu}(v) and Lemma \ref{basic-lem-1}(iii),
\begin{align*}
10\cdot 2^{2j}\big(\tau (e_N^{(j)})+\tau(e_N^{(j-1)})\big)&\leq 2\tau\big(s_N^2(v^{(j)})\one_{(2^{2j-1},\infty)}(s_N^2(v^{(j)}))\big)\\
&\qquad + 8\tau\big(s_N^2(v^{(j-1)})\one_{(2^{2j-3},\infty)}(s_N^2(v^{(j-1)}))\big)\\
&\leq 10 \tau\big(s_N^2(v^{(j)})\one_{(2^{2j-3},\infty)}(s_N^2(v^{(j)}))\big)\\
&\leq  10 \tau\big(s_N^2(x)\one_{(2^{2j-3},\infty)}(s_N^2(x))\big).
\end{align*}
This, together with \eqref{eq:2}, proves \eqref{eq:3}.
\end{proof}

Combining Proposition \ref{estimate:y} with Proposition \ref{lem:intrinsic}, the following localization estimate is immediate.
\begin{cor}\label{cor:localized}
Fix $j\in\mathbb Z$. For  $4<p<\infty$, we have
\begin{equation}\label{eq:localized}
\|y_N^{(j)}\|_p^p \leq \left(\frac{32p}{\log p}\right)^p 2^{j(p-2)}\|y_N^{(j)}\|_2^2.
\end{equation}
\end{cor}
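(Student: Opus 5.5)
The corollary should follow by applying Proposition \ref{lem:intrinsic} to the martingale $y^{(j)}=(y_n^{(j)})_{1\leq n\leq N}$ with the choice $a=2^j$. Three things need checking: that $y^{(j)}$ is a self-adjoint martingale with $y_1^{(j)}=0$, and that the two hypotheses in \eqref{eq:intrinsic-assumptions} hold with this $a$.

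For the first point, each $dw_n^{(i)}=q_{n-1}^{(i)}dv_n^{(i)}q_{n-1}^{(i)}$ is self-adjoint, because $dv_n^{(i)}$ is self-adjoint (as $x$ is) and $q_{n-1}^{(i)}$ is a projection. Each $dw_n^{(i)}$ is also a martingale difference. Indeed, $q_{n-1}^{(i)}\in\mathcal M_{n-1}$ by Lemma \ref{lem-cuculescu}(i) (with $q_1^{(i)}=\one$), so the module property of $\mathcal E_{n-1}$ gives $\mathcal E_{n-1}(dw_n^{(i)})=q_{n-1}^{(i)}\mathcal E_{n-1}(dv_n^{(i)})q_{n-1}^{(i)}=0$. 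Hence $dy_n^{(j)}=dw_n^{(j)}-dw_n^{(j-1)}$ is a self-adjoint martingale difference sequence, and $dy_1^{(j)}=0$ gives $y_1^{(j)}=0$.

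For the hypotheses, the first part of Proposition \ref{estimate:y}, namely \eqref{eq:1}, gives $\sup_n\|dy_n^{(j)}\|_\infty\leq 3\cdot 2^j=3a$ and $s_N^2(y^{(j)})\leq 5\cdot 2^{2j}\one=5a^2\one$. These are exactly the hypotheses of Proposition \ref{lem:intrinsic}. In the proof of \eqref{eq:1} the paper writes $5\cdot 2^j\one$ for the square function bound; this is a typo, since the computation $2(2^{2j+1}+2^{2j-1})=5\cdot 2^{2j}$ gives the correct value. Since $p>4$, the range condition $4\leq p<\infty$ holds. Proposition \ref{lem:intrinsic} then gives $\|y_N^{(j)}\|_p^p\leq (32p/\log p)^p\,a^{p-2}\|y_N^{(j)}\|_2^2$, and substituting $a^{p-2}=2^{j(p-2)}$ yields \eqref{eq:localized}.

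The only step that needs any care is the martingale-difference property of the stopped pieces, which rests on $q_{n-1}^{(i)}$ being $\mathcal M_{n-1}$-measurable. Beyond that the corollary is a direct substitution.
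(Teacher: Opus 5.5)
Your proposal is correct and is essentially the paper's argument: the paper obtains the corollary immediately by applying Proposition \ref{lem:intrinsic} to $y^{(j)}$ with $a=2^j$, with the hypotheses supplied by \eqref{eq:1}. Your added check that $y^{(j)}$ is a self-adjoint martingale with $y_1^{(j)}=0$ is accurate, and so is your remark that the bound $5\cdot 2^j\one$ in the proof of \eqref{eq:1} should read $5\cdot 2^{2j}\one$.
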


\section{Dyadic summation}
This section assembles the summability estimates needed to pass from dyadic localization to the global inequality. We control the weighted sums of the localized \(L^2\)-energies, the traces of the exceptional projections, and the truncation errors by the \(L^p\)-norm of the conditioned square function and the sum of the \(p\)-th moments of the martingale differences. We then reconstruct each stopped approximation from the localized pieces in \(L^{2p}\). This choice of exponent produces a summable convolution kernel independent of \(p\), preserving the \(p/\log p\) dependence obtained in Section 3.

\begin{lemma}\label{dyadic-1} For $4\leq p<\infty$, we have
$$\sum_{j\in\mathbb Z}2^{j(p-2)}\|y_N^{(j)}\|_2^2\leq C_{\rm abs}^p\Big(\|s_N(x)\|_p^p+\sum_{k=1}^N\|dx_k\|_p^p\Big).$$
\end{lemma}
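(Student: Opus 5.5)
We start from estimate \eqref{eq:3} of Proposition \ref{estimate:y}, which bounds $\|y_N^{(j)}\|_2^2$ by two terms. Multiplying by $2^{j(p-2)}$ and summing over $j\in\mathbb Z$ gives
\[
\sum_{j}2^{j(p-2)}\|y_N^{(j)}\|_2^2\leq 10^2\,\mathrm{I}+\mathrm{II},
\]
where
\[
\mathrm{I}=\sum_j 2^{j(p-2)}\tau\Big(s_N^2(x)\one_{(2^{2j-3},\infty)}(s_N^2(x))\Big),\qquad
\mathrm{II}=\sum_j 2^{j(p-2)}\sum_{k=1}^N\tau\Big((dx_k)^2\one_{(2^{j-1},2^j]}(|dx_k|)\Big).
\]
The plan is to evaluate each sum through the spectral decomposition of a single self-adjoint operator: $s_N^2(x)$ in $\mathrm{I}$, and $|dx_k|$ for fixed $k$ in $\mathrm{II}$. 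In both cases we interchange the sum over $j$ with the trace, which is legitimate by monotone convergence since all terms are nonnegative.

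For $\mathrm{II}$, fix $k$ and write $\mu_k$ for the spectral measure of $|dx_k|$ with respect to $\tau$. Then
\[
\sum_j 2^{j(p-2)}\tau\big((dx_k)^2\one_{(2^{j-1},2^j]}(|dx_k|)\big)=\int \Big(\sum_j 2^{j(p-2)}\one_{(2^{j-1},2^j]}(t)\Big)t^2\,d\mu_k(t).
\]
For each $t>0$ exactly one $j$ satisfies $2^{j-1}<t\le 2^j$, and for that $j$ we have $2^{j(p-2)}\leq (2t)^{p-2}$. The integrand is therefore at most $2^{p-2}t^p$, so $\mathrm{II}\leq 2^{p-2}\sum_k\|dx_k\|_p^p$.

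For $\mathrm{I}$, let $\sigma=s_N^2(x)\ge0$. The indicator $\one_{(2^{2j-3},\infty)}(\sigma)$ is nonzero only when $2^{2j}<8\sigma$, that is $2^j<(8\sigma)^{1/2}$. Since $p-2>0$, the geometric sum satisfies
\[
\sum_{j:\,2^{j}<\sqrt{8u}}2^{j(p-2)}\leq \frac{(8u)^{(p-2)/2}}{1-2^{-(p-2)}}\leq \tfrac{4}{3}(8u)^{(p-2)/2}\quad(p\ge4).
\]
Functional calculus then gives
\[
\mathrm{I}\leq \tfrac43\, 8^{(p-2)/2}\,\tau\big(\sigma^{(p-2)/2}\sigma\big)=\tfrac43\,8^{(p-2)/2}\|s_N(x)\|_p^p .
\]

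Combining the two bounds, the total is at most
\[
\big(100\cdot\tfrac43\cdot 8^{p/2}+2^p\big)\Big(\|s_N(x)\|_p^p+\sum_k\|dx_k\|_p^p\Big)\leq C_{\rm abs}^p\Big(\|s_N(x)\|_p^p+\sum_k\|dx_k\|_p^p\Big),
\]
where absorbing the factor $100$ into $C_{\rm abs}^p$ is harmless because $p\ge4$. The only point needing care is that the geometric series is summed against the strict threshold $2^{2j-3}$, and that the convergence factor $1-2^{-(p-2)}$ stays uniformly bounded below for $p\geq4$. Neither step is a real obstacle; the argument is a routine layer-cake computation.
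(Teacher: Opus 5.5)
Your proof is correct and matches the paper's argument: you start from \eqref{eq:3}, bound the square-function tail sum by the spectral-measure and geometric-series computation (the paper's \eqref{eq:ess-1} with $c=8$), and bound the dyadic-increment sum by $2^{p-2}\sum_k\|dx_k\|_p^p$ exactly as in \eqref{eq:ess-2}. Your explicit constants ($\tfrac43\,8^{(p-2)/2}$ and $2^{p-2}$, with the factor $100$ absorbed because $p\ge 4$) are all valid.
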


\begin{proof}
For $\alpha$, $L>0$,  we have
\begin{equation}\label{eq:series}
\sum_{\{j\in\mathbb Z: 2^j<L\} } 2^{j\alpha}\leq \frac{L^\alpha}{1-2^{-\alpha}}.
\end{equation}
Let $d\mu(t):=d\tau(\one_{(-\infty,t]}(s_N^2(x)))$ denote the scalar spectral measure induced by $s_N^2(x)$ and $\tau$. For $c>0$, by Tonelli's theorem and \eqref{eq:series},
\begin{equation}\label{eq:ess-1}
\begin{split}
&\sum_{j\in\mathbb Z} 2^{j(p-2)} \tau(s_N^2(x)\one_{(2^{2j}/c,\infty)}(s_N^2(x)))\\&\qquad=\int_0^\infty t\Big(\sum_{\{j\in\mathbb Z:2^j<\sqrt{ct}\}}2^{j(p-2)}\Big)d\mu(t)\\
&\qquad\leq \frac{c^{(p-2)/2}}{1-2^{2-p}}\int_0^\infty t^{p/2} d\mu(t)=\frac{c^{(p-2)/2}}{1-2^{2-p}}\|s_N(x)\|_p^p.
\end{split}
\end{equation}
On the other hand,
\begin{equation}\label{eq:ess-2}
\begin{split}
&\sum_{j\in\mathbb Z} 2^{j(p-2)} \sum_{k=1}^N\tau\left((dx_k)^2\one_{(2^{j-1
},2^j]}(|dx_k|)\right)\\
&\qquad \leq 2^{p-2}\sum_{k=1}^N \tau(|dx_k|^p)=2^{p-2}\sum_{k=1}^N\|dx_k\|_p^p.
\end{split}
\end{equation}
Applying \eqref{eq:3}, \eqref{eq:ess-1} with $c=8$, and \eqref{eq:ess-2}, we deduce the desired estimate.
\end{proof}

\begin{lemma}\label{dyadic-2} For $4\leq p<\infty$, we have
$$\sum_{j\in\mathbb Z}2^{pj}\tau(e_N^{(j)})\leq C_{\rm abs}^p \|s_N(x)\|_p^p.$$
\end{lemma}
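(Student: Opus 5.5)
The plan is to feed the trace bound of Lemma \ref{lem-cuculescu}(v) into the weighted dyadic sum and then follow the Tonelli argument already used in \eqref{eq:ess-1}. By Lemma \ref{lem-cuculescu}(v) and Lemma \ref{basic-lem-1}(iii), for each $j\in\mathbb Z$,
\[
\tau(e_N^{(j)})\leq 2^{1-2j}\,\tau\big(s_N^2(v^{(j)})\one_{(2^{2j-1},\infty)}(s_N^2(v^{(j)}))\big).
\]
The first step is to replace $s_N^2(v^{(j)})$ by $s_N^2(x)$ on the right-hand side. The map $t\mapsto t\one_{(c,\infty)}(t)$ is not operator monotone, so this cannot be done by a direct operator inequality. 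The trace, however, is monotone in this sense. Write
\[
\tau\big(a\one_{(c,\infty)}(a)\big)=\int_c^\infty \lambda_u(a)\,du + c\,\lambda_c(a),
\]
where $\lambda_u(a)$ is the distribution function. If $0\le a\le b$, then \eqref{eq:ordered-tail} gives $\lambda_u(a)\leq\lambda_u(b)$ for every $u$, and hence
\[
\tau\big(a\one_{(c,\infty)}(a)\big)\leq \tau\big(b\one_{(c,\infty)}(b)\big).
\]
This is the same step the paper uses implicitly at the end of the proof of Proposition \ref{estimate:y}. Applying it with $a=s_N^2(v^{(j)})$, $b=s_N^2(x)$ and $c=2^{2j-1}$ yields
\[
2^{pj}\tau(e_N^{(j)})\leq 2\cdot 2^{j(p-2)}\,\tau\big(s_N^2(x)\one_{(2^{2j-1},\infty)}(s_N^2(x))\big).
\]

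The second step is to sum over $j\in\mathbb Z$ and invoke \eqref{eq:ess-1} with $c=2$. Here $2^{2j}/c=2^{2j-1}$, which matches the threshold above. This gives
\[
\sum_{j\in\mathbb Z}2^{pj}\tau(e_N^{(j)})\leq \frac{2\cdot 2^{(p-2)/2}}{1-2^{2-p}}\,\|s_N(x)\|_p^p .
\]
For $p\ge4$ we have $1-2^{2-p}\ge 3/4$, so the constant is at most $C_{\rm abs}^p$, for instance $2^p$.

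No step is a genuine obstacle. The only point that needs care is the trace-monotonicity used in the first step, and the distribution-function representation handles it.
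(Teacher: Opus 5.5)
Your proof is correct and follows the paper's argument step for step. You chain Lemma \ref{lem-cuculescu}(v), the comparison $s_N^2(v^{(j)})\leq s_N^2(x)$ from Lemma \ref{basic-lem-1}(iii), and \eqref{eq:ess-1} with $c=2$, arriving at the same constant $2^{p/2}/(1-2^{2-p})$. Your only addition is the layer-cake justification of $\tau(a\one_{(c,\infty)}(a))\leq\tau(b\one_{(c,\infty)}(b))$ for $0\leq a\leq b$, which the paper uses without comment (since $s_N^2(x)$ need not be bounded, \eqref{eq:ordered-tail} is really being used in its standard extension to $\tau$-measurable operators).
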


\begin{proof}
Applying Lemma \ref{lem-cuculescu}(v), Lemma \ref{basic-lem-1}(iii) and \eqref{eq:ess-1} with $c=2$, we conclude
\begin{align*}
\sum_{j\in\mathbb Z}2^{jp}\tau(e_N^{(j)})&\leq 2\sum_{j\in\mathbb Z} 2^{(p-2)j}\tau\big(s_N^2(v^{(j)})\one_{(2^{2j-1},\infty)}(s_N^2(v^{(j)}))\big)\\
& \leq 2\sum_{j\in\mathbb Z} 2^{(p-2)j}\tau\big(s_N^2(x)\one_{(2^{2j-1},\infty)}(s_N^2(x))\big)\\
&\leq \frac{2^{p/2}}{1-2^{2-p}}\|s_N(x)\|_p^p.
\end{align*}
The proof is complete.
\end{proof}

\begin{lemma} \label{dyadic-3} For $4\leq p<\infty$, we have
$$\sum_{j\in\mathbb Z} 2^{j(p-1)}\sum_{k=1}^N\tau(|dx_k|\one_{(2^{j},\infty)}(|dx_k|))\leq C_{\rm abs} \sum_{k=1}^N\|dx_k\|_p^p.$$
\end{lemma}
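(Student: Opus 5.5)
The argument is a Tonelli-type interchange of the sum over $j$ with the spectral integral, exactly as in \eqref{eq:ess-1}, but now applied to each $|dx_k|$ separately. Fix $k$ and let $\nu_k$ denote the scalar spectral measure of $|dx_k|$ with respect to $\tau$, that is, $d\nu_k(t)=d\tau(\one_{[0,t]}(|dx_k|))$. Then
$$\tau(|dx_k|\one_{(2^j,\infty)}(|dx_k|))=\int_{(2^j,\infty)}t\,d\nu_k(t).$$
Since every term is nonnegative, Tonelli's theorem lets us exchange the sum and the integral:
$$\sum_{j\in\mathbb Z}2^{j(p-1)}\tau(|dx_k|\one_{(2^{j},\infty)}(|dx_k|))=\int_0^\infty t\Big(\sum_{\{j\in\mathbb Z:\,2^j<t\}}2^{j(p-1)}\Big)d\nu_k(t).$$

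Next we apply \eqref{eq:series} with $\alpha=p-1$ and $L=t$. This bounds the inner geometric sum by $t^{p-1}/(1-2^{1-p})$. Because $p\geq 4$, the denominator satisfies $1-2^{1-p}\geq 7/8$, so the inner sum is at most $\frac87 t^{p-1}$. Substituting back, the integral is at most
$$\frac87\int_0^\infty t^p\,d\nu_k(t)=\frac87\tau(|dx_k|^p)=\frac87\|dx_k\|_p^p.$$

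Summing this bound over $k=1,\dots,N$ gives the claim with $C_{\rm abs}=8/7$. This constant is absolute and does not depend on $p$, which is the point of the lemma, in contrast with the factor $2^{p-2}$ that appears in \eqref{eq:ess-2}.

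There is no genuine obstacle here. The only things to check are the measure-theoretic ones: that the interchange of sum and integral is legitimate, which holds by positivity, and that the endpoints match. The endpoint condition $t>2^j$ is equivalent to $2^j<t$, which is exactly the strict inequality under which \eqref{eq:series} applies, so no boundary term is lost.
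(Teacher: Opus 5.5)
Your proposal is correct and follows essentially the same route as the paper's proof: you apply Tonelli's theorem to the scalar spectral measure of each $|dx_k|$, then use the geometric bound \eqref{eq:series} with $\alpha=p-1$. This gives the constant $1/(1-2^{1-p})\le 8/7$, and your explicit value simply makes the paper's $C_{\rm abs}$ concrete.
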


\begin{proof}
Let $d\mu_k$ denote the scalar  spectral measure of $|dx_k|$. Applying Tonelli's theorem and \eqref{eq:series}, 
\begin{align*}
\sum_{j\in\mathbb Z} 2^{j(p-1)}\sum_{k=1}^N\tau(|dx_k|\one_{(2^{j},\infty)}(|dx_k|))&=\sum_{k=1}^N \int_0^\infty t \sum_{\{j\in\mathbb Z:2^j<t\}} 2^{(p-1)j}\, d\mu_k(t)\\
&\leq \frac{1}{1-2^{1-p}}\sum_{k=1}^N \int_0^\infty t^{p}\, d\mu_k(t)\\
&=\frac{1}{1-2^{1-p}} \sum_{k=1}^N\|dx_k\|_p^p,
\end{align*}
which proves the desired assertion.
\end{proof}

We next recombine all dyadic pieces below a fixed level. The use of the exponent
$2p$ is what produces a summable convolution kernel independent of $p$.

\begin{lemma}\label{lem:w}
Let $4\leq p<\infty$. We have
$$\sum_{j\in\mathbb Z} 2^{-pj}\|w_N^{(j)}\|_{2p}^{2p}
\leq C_{abs}^{2p}\left(\frac{2p}{\log (2p)}\right)^{2p}\sum_{j\in\mathbb Z}2^{j(p-2)}\|y_N^{(j)}\|_2^2.$$
\end{lemma}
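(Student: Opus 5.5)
Fix $j$ and write $w_N^{(j)}$ as a telescoping sum of the localized pieces at lower levels. For $i\to-\infty$, the truncation $h_n^{(i)}=dx_n\ind_{[0,2^i]}(|dx_n|)$ tends to $dx_n\ind_{\{0\}}(|dx_n|)=0$, so $dv_n^{(i)}\to0$ in $L^\infty$ (norm at most $2^{i+1}$). Hence $\|w_N^{(i)}\|_\infty\le \sum_n\|dv_n^{(i)}\|_\infty\to0$ for the finite martingale, and
\[
w_N^{(j)}=\sum_{i\le j}\big(w_N^{(i)}-w_N^{(i-1)}\big)=\sum_{i\le j} y_N^{(i)},
\]
with convergence in $L^\infty$, hence in $L^{2p}$. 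The triangle inequality in $L^{2p}$ then gives
\[
\|w_N^{(j)}\|_{2p}\le \sum_{i\le j}\|y_N^{(i)}\|_{2p}.
\]

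Next I would apply Corollary \ref{cor:localized} with exponent $2p$ in place of $p$ (note $2p\ge 8>4$):
\[
\|y_N^{(i)}\|_{2p}\le \frac{64p}{\log(2p)}\,2^{i(2p-2)/(2p)}\|y_N^{(i)}\|_2^{1/p}.
\]
Set $a_i:=2^{i(p-2)}\|y_N^{(i)}\|_2^2$, the summand on the right-hand side of the lemma. Then $\|y_N^{(i)}\|_2^{1/p}=a_i^{1/(2p)}2^{-i(p-2)/(2p)}$, and the power of $2$ becomes $2^{i[(2p-2)-(p-2)]/(2p)}=2^{i/2}$. Therefore
\[
2^{-j/2}\|w_N^{(j)}\|_{2p}\le \frac{64p}{\log(2p)}\sum_{i\le j}2^{-(j-i)/2}a_i^{1/(2p)}.
\]
The left side raised to the power $2p$ is exactly $2^{-pj}\|w_N^{(j)}\|_{2p}^{2p}$. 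This is the point of passing to $2p$: the kernel $2^{-(j-i)/2}$ does not depend on $p$.

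It remains to sum over $j$. I would use Hölder (equivalently, Jensen) with the weights $2^{-k/2}$, $k=j-i\ge0$, whose total mass is $S=(1-2^{-1/2})^{-1}$:
\[
\Big(\sum_{k\ge0}2^{-k/2}a_{j-k}^{1/(2p)}\Big)^{2p}\le S^{2p-1}\sum_{k\ge0}2^{-k/2}a_{j-k}.
\]
Summing over $j\in\mathbb Z$ and exchanging the order of summation by Tonelli yields $S^{2p}\sum_i a_i$. Altogether,
\[
\sum_j2^{-pj}\|w_N^{(j)}\|_{2p}^{2p}\le (64S)^{2p}\Big(\frac{p}{\log 2p}\Big)^{2p}\sum_i a_i,
\]
which is the claim with $C_{\rm abs}=32S$, since $64p=32\cdot 2p$.

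The main point to check carefully is the first step: the telescoping representation needs $w_N^{(i)}\to0$ as $i\to-\infty$. This holds because each $dv_n^{(i)}$ has norm at most $2^{i+1}$ and the martingale is finite. Everything else is an exponent bookkeeping computation plus a discrete Young/Jensen inequality.
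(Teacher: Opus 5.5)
Your proof is correct and follows essentially the same route as the paper: telescope $w_N^{(j)}=\sum_{i\le j}y_N^{(i)}$, bound $2^{-j/2}\|w_N^{(j)}\|_{2p}$ by the convolution of the $p$-independent kernel $2^{-k/2}$ with the localized pieces, and combine Young's inequality $\|b*a\|_{2p}\le\|b\|_1\|a\|_{2p}$ (your weighted H\"older step is exactly its proof) with Corollary \ref{cor:localized} at exponent $2p$. The differences are only in presentation: you apply the corollary before summing rather than after, and you obtain convergence of the telescoping series directly from $\|w_N^{(i)}\|_\infty\le(N-1)2^{i+1}\to0$ as $i\to-\infty$, which is slightly cleaner than the paper's H\"older-based convergence argument and does not require the right-hand side to be finite.
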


\begin{proof}
Introduce two sequences $a=(a_i)_{i\in\mathbb Z}$ and $b=(b_k)_{k\in\mathbb N}$:
$$a_i:=2^{-i/2}\|y_N^{(i)}\|_{2p},\qquad b_k:=2^{-k/2}.$$
Clearly, $b=(b_k)_{k\in\mathbb N}\in \ell_{(2p)'}$, where $(2p)'$ denotes the conjugate number of $2p$. Also, by Corollary \ref{cor:localized} and Lemma \ref{dyadic-1}, one can easily see  $a=(a_i)_{i\in\mathbb Z}\in \ell_{2p}$. Consequently, for fixed $j\in\mathbb Z$, 
\begin{equation}\label{eq:series-y}
\sum_{i\leq j}\|y_N^{(i)}\|_{2p}=2^{j/2}\sum_{k=0}^\infty b_ka_{j-k}\leq 2^{j/2}\|(b_k)_{k\in\mathbb N}\|_{(2p)'} \|(a_i)_{i\in\mathbb Z}\|_{2p}<\infty,
\end{equation}
where H\"{o}lder's inequality is used. This means that for each $j\in\mathbb Z$, the series $\sum_{i\leq j} y_N^{(i)}$ converges in $L^{2p}.$ Note that the $m$-th partial sum can be written as follows:
$$\sum_{i=m}^jy_N^{(i)} =w_N^{(j)}-w_N^{(m-1)}.$$
By Lemma \ref{basic-lem-1}(i), $\|w_N^{(m)}\|\leq \sum_{n=1}^N\|dw_n^{(m)}\|_\infty\leq \sum_{n=1}^N\|dv_n^{(m)}\|_\infty\leq (N-1)2^{m+1}$. Since the trace is finite, $w_N^{(m)}\to 0$ in $L^{2p}$ as $m\to\infty$. Hence, the series $\sum_{i\leq j} y_N^{(i)}$ equals $w_N^{(j)}$, and \eqref{eq:series-y} implies
$$2^{-j/2}\|w_N^{(j)}\|_{2p}\leq \sum_{k=0}^\infty b_ka_{j-k}=b\ast a.$$
By Minkowski's inequality, we obtain
\begin{equation}\label{eq:almost}\sum_{j\in\mathbb Z}2^{-pj}\|w_N^{(j)}\|_{2p}^{2p}\leq \|b\ast a\|_{2p}^{2p}\leq \|b\|_1^{2p}\|a\|_{2p}^{2p}\leq C_{\rm abs}\sum_{j\in\mathbb Z}2^{-pj}\|y_N^{(j)}\|_{2p}^{2p}.\end{equation}
According to Corollary \ref{cor:localized},
$$\|y_N^{(j)}\|_{2p}^{2p} \leq C_{\rm abs}^{2p}\left(\frac{2p}{\log 2p}\right)^{2p} 2^{j(2p-2)}\|y_N^{(j)}\|_2^2.$$
Plugging this estimate into \eqref{eq:almost}, we conclude the desired result.
\end{proof}

\section{Proof of the main result}

In this section, we combine all materials established in the previous sections to provide the proof of Theorem \ref{main:result}.  Before that,  we establish the following lemma which compares the original terminal value with its stopped
approximation.

\begin{lemma}\label{dis-est}
For every $j\in \mathbb Z$ and $s>0$, the following inequality 
$$\lambda_{2^{j+1}s}(x_N)\leq \lambda_{2^js}(w_N^{(j)})+2\tau(e_N^{(j)})+2^{1-j}s^{-1}\sum_{k=1}^N\tau(|dx_k|\one_{(2^{j},\infty)}(|dx_k|))$$
holds true.
\end{lemma}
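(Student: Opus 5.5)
We decompose $x_N$ into three pieces and apply the tail estimates of Section 2 to each. Fix $j$ and drop the superscript $(j)$. Since $x_1=0$, we have $x_N=\sum_{n=2}^N dx_n$. Writing $dx_n=h_n+g_n$ with $g_n:=dx_n\one_{(2^j,\infty)}(|dx_n|)$, and using $\mathcal E_{n-1}(dx_n)=0$, we get $\mathcal E_{n-1}(h_n)=-\mathcal E_{n-1}(g_n)$. Therefore
\[
x_N=v_N+\sum_{n=2}^N\big(g_n-\mathcal E_{n-1}(g_n)\big)=:v_N+u_N .
\]
By \eqref{distribution-triangle},
\[
\lambda_{2^{j+1}s}(x_N)\le \lambda_{2^js}(v_N)+\lambda_{2^js}(u_N).
\]

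The term $u_N$ is handled by Chebyshev's inequality in $L^1$:
\[
\lambda_{2^js}(u_N)\le (2^js)^{-1}\|u_N\|_1\le (2^js)^{-1}\sum_n\big(\|g_n\|_1+\|\mathcal E_{n-1}g_n\|_1\big)\le 2^{1-j}s^{-1}\sum_k\tau\big(|dx_k|\one_{(2^j,\infty)}(|dx_k|)\big).
\]
Here we used the $L^1$-contractivity of $\mathcal E_{n-1}$. This is exactly the third term in the claimed bound.

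The term $v_N$ is compared with $w_N$. Put $q=q_N$ and $e=e_N=\one-q$. By Lemma \ref{lem:stopping}(ii), $q(v_N-w_N)q=0$. Since $v_N-w_N$ is bounded (finite sum of bounded differences) and self-adjoint, \eqref{eq:rank-bound} in Lemma \ref{lem:tails} gives $\tau({\rm supp}(v_N-w_N))\le 2\tau(e)$. Then \eqref{eq:perturb-tail}, applied with $x=v_N-w_N$ and $y=w_N$, yields
\[
\lambda_{2^js}(v_N)\le 2\tau(e_N)+\lambda_{2^js}(w_N).
\]
Combining the three estimates gives the lemma.

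The only point needing care is the claim $\mathcal E_{n-1}(h_n)=-\mathcal E_{n-1}(g_n)$ together with the bookkeeping at $n=1$. Since $dx_1=0$, the convention $dv_1=0$ is consistent, and the decomposition above is exact. Everything else is a direct application of the earlier lemmas, so no genuine obstacle is expected.
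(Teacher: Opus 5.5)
Your proposal is correct and follows the paper's proof essentially step by step. You use the same splitting $x_N=v_N^{(j)}+\sum_{n=2}^N\big(g_n-\mathcal E_{n-1}(g_n)\big)$, bound the second piece in $L^1$ via Chebyshev and the contractivity of $\mathcal E_{n-1}$, and compare $v_N^{(j)}$ with $w_N^{(j)}$ through Lemma~\ref{lem:stopping}(ii) and both parts of Lemma~\ref{lem:tails}. You are in fact more explicit than the paper about why $x_N-v_N^{(j)}$ has this form, namely the identity $\mathcal E_{n-1}(h_n)=-\mathcal E_{n-1}(g_n)$, which the paper attributes only to ``the property of martingale.''
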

\begin{proof}
By the property of martingale, we have 
\begin{equation}\label{6.11}
\begin{split}
\|x_N-v_N^{(j)}\|_1&=\left\|\sum_{k=2}^Ndx_k \one_{(2^{j},\infty)}(|dx_k|)- \sum_{k=2}^N \mathcal E_{k-1}\left(dx_k \one_{(2^{j},\infty)}(|dx_k|)\right)\right\|_1 \\
&\leq 2\sum_{k=1}^N\tau(|dx_k|\one_{(2^{j},\infty)}(|dx_k|)).
\end{split}
\end{equation}
Using Lemma \ref{lem:stopping}(ii), we have $q_N^{(j)}(v_N^{(j)}-w_N^{(j)})q_N^{(j)}=0$. Thus, it follows from Lemma \ref{lem:tails} that
$$\tau({\rm supp}(v_N^{(j)}-w_N^{(j)}))\leq 2\tau (e_N^{(j)}),$$
and for arbitrary $t>0$, 
$$\lambda_t (v_N^j)\leq \lambda_t (w_N^j)+\tau({\rm supp}(v_N^{(j)}-w_N^{(j)}))\leq
\lambda_t (w_N^j)+2\tau (e_N^{(j)}).$$
Applying \eqref{distribution-triangle}, it follows from the above estimate  that
\begin{align*}
\lambda_{2^{j+1}s}(x_N)&\leq \lambda_{2^{j}s}(v_N^{(j)})+\lambda_{2^{j}s}(x_N-v_N^{(j)})\\
&\leq \lambda_{2^js}(w_N^{(j)})+2\tau(e_N^{(j)})+\lambda_{2^{j}s}(x_N-v_N^{(j)}).
\end{align*}
Applying  Chebyshev's inequaltiy and  \eqref{6.11}, we get
\begin{align*}
\lambda_{2^{j+1}s}(x_N)&\leq \lambda_{2^js}(w_N^{(j)})+2\tau(e_N^{(j)})+
2^{-j} s^{-1}\|x_N-v_N^{(j)}\|_1\\
&\leq \lambda_{2^js}(w_N^{(j)})+2\tau(e_N^{(j)})+2^{1-j}s^{-1}\sum_{k=1}^N\tau(|dx_k|\one_{(2^{j},\infty)}(|dx_k|)),
\end{align*}
which completes the proof.
\end{proof}

We are now ready to show the main result.

\begin{proof}[Proof of Theorem \ref{main:result}]
It suffices to prove the result for finite self-adjoint martingales $(x_n)_{1\leq n\leq N}$ with the initial value $x_1=0$.  Indeed, if $x_1\neq 0$, then one can instead consider the martingale $(x_n-x_1)_{1\leq n\leq N}$. Set 
$s=\frac{2p}{\log (2p)}L$, where $L>0$ is  a sufficiently large absolute constant.
Since the distribution function of $x_N$ is nonincreasing, it follows that 
\begin{equation*}\label{moment-x}
\begin{split}
\|x_N\|_p^p &=\sum_{j\in\mathbb Z}p\int_{2^{j+1}s}^{2^{j+2}s} t^{p-1} \lambda_t(x_N) \ dt \\
&\leq (2^p-1)(2s)^p\sum_{j\in\mathbb Z}  2^{pj}\lambda_{2^{j+1}s}(x_N).
\end{split}
\end{equation*}
Applying Lemma \ref{dis-est}, we deduce that
\begin{equation}\label{eq:123}
\|x_N\|_p^p \leq (2^p-1)(2s)^p \Big( {\rm I}+{\rm II}+{\rm III}\big)\Big),
\end{equation}
where
\begin{align*}
{\rm I}&:=\sum_{j\in \mathbb Z} 2^{pj}\lambda_{2^js}(w_N^{(j)}),\\
{\rm II}&:=2\sum_{j\in \mathbb Z} 2^{pj}\tau(e_N^{(j)}),\\
{\rm III}&:=2s^{-1} \sum_{j\in \mathbb Z} 2^{(p-1)j}\sum_{k=1}^N\tau(|dx_k|\one_{(2^{j},\infty)}(|dx_k|)).
\end{align*}
For ${\rm I}$, it follows from the Chebyshev inequality, Lemma \ref{lem:w} and Lemma \ref{dyadic-1}  that
\begin{align*}
{\rm I}&\leq s^{-2p}\sum_{j\in\mathbb Z}2^{-pj}\|w_N^{(j)}\|_{2p}^{2p}\\
&\leq s^{-2p}  C_{abs}^{2p}\left(\frac{2p}{\log (2p)}\right)^{2p}\sum_{j\in\mathbb Z}2^{j(p-2)}\|y_N^{(j)}\|_2^2\\
& \leq C_{abs}^{2p}  \Big(\|s_N(x)\|_p^p+\sum_{k=1}^N\|dx_k\|_p^p\Big).
\end{align*}
By Lemma \ref{dyadic-2}, 
$${\rm II}\leq  C_{\rm abs}^p \|s_N(x)\|_p^p.$$
Applying Lemma \ref{dyadic-3}, we derive
$${\rm III}\leq  C_{\rm abs} s^{-1} \sum_{k=1}^N\|dx_k\|_p^p\leq C_{\rm abs} \sum_{k=1}^N\|dx_k\|_p^p,$$
where the last inequality holds since $L$ is sufficiently large.
Combining the above estimates for ${\rm I}$,  ${\rm II}$,  ${\rm II}$ with \eqref{eq:123}, we conclude
$$\|x_N\|_p^p\leq (2^p-1)(2s)^p C_{abs}^{p}  \Big(\|s_N(x)\|_p^p+\sum_{k=1}^N\|dx_k\|_p^p\Big).$$
Taking $p$-th root  gives
$$\|x_N\|_p\leq C_{\rm abs}\frac{p}{\log p}\max\Big\{\|s_N(x)\|_p, \Big(\sum_{k=1}^N\|dx_k\|_p^p\Big)^{1/p}\Big\}.$$
The proof is complete.
\end{proof}


\section*{Acknowledgment}
During the preparation of this work, the authors used GPT-5.6 Sol to assist with language editing and refinement of mathematical arguments. The authors take full responsibility for the content of this paper.

\end{document}